\numberwithin{equation}{section}
\newtheorem{thm}{Theorem}[section]
\newtheorem{cor}[thm]{Corollary}
\newtheorem{lem}[thm]{Lemma}
\theoremstyle{remark}
\newtheorem*{claim}{Claim}
\newtheorem{rmk}[thm]{Remark}
\newtheorem*{question}{Question}
\theoremstyle{definition}
\newtheorem{defn}[thm]{Definition}
\newcommand{\eps}{\varepsilon}
\newcommand{\R}{\mathcal{R}}
\newcommand{\dir}{\mathrm{dir}}
\newcommand{\rad}{\mathrm{rad}}
\newcommand{\azi}{\mathrm{azi}}
\newcommand{\pol}{\mathrm{pol}}
\DeclareMathOperator{\vspan}{span}
\DeclareMathOperator{\id}{id}
\newcommand{\nat}{\mathbb{N}}
\newcommand{\real}{\mathbb{R}}
\newcommand{\dif}{\mathrm{d}}
\newcommand{\End}{\mathcal{L}}
\DeclarePairedDelimiter{\abs}{\lvert}{\rvert}
\DeclarePairedDelimiter{\norm}{\lVert}{\rVert}
\DeclarePairedDelimiter{\parens}{(}{)}
\DeclarePairedDelimiter{\set}{\{}{\}}
\DeclarePairedDelimiter{\brackets}{\lbrack}{\rbrack}
\DeclarePairedDelimiter{\angles}{\langle}{\rangle}
\DeclarePairedDelimiter{\cci}{\lbrack}{\rbrack}
\DeclarePairedDelimiter{\coi}{\lbrack}{\lbrack}
\DeclarePairedDelimiter{\oci}{\rbrack}{\rbrack}
\DeclarePairedDelimiter{\ooi}{\rbrack}{\lbrack}
\title[Cluster solutions to the Schrödinger--Bopp--Podolsky system]{Cluster semiclassical states of the nonlinear Schrödinger--Bopp--Podolsky system}
\begin{document}

\author{Gustavo de Paula Ramos}
\address{Instituto de Matemática e Estatística, Universidade de São Paulo, Rua do Matão, 1010, 05508-090 São Paulo SP, Brazil}
\email{gpramos@ime.usp.br}
\urladdr{http://gpramos.com}

\begin{abstract}
Consider the following nonlinear Schrödinger--Bopp--Podolsky system in $\real^3$:
\[
\begin{cases}
-\eps^2 \Delta u + \parens{V + \phi} u = u \abs{u}^{p-1};
\\
a^2 \Delta^2 \phi - \Delta \phi = 4 \pi u^2,
\end{cases}
\]
where $a, \eps > 0$; $1 < p < 5$;
$V \colon \real^3 \to \ooi{0, \infty}$ and we want to solve for
$u, \phi \colon \real^3 \to \real$. By means of Lyapunov--Schmidt reduction, we show that if $K \geq 2$, $z_0$ is a strict local minimum of $V$, $V$ is adequately flat in a neighborhood of $z_0$ and $\eps$ is sufficiently small, then the system has a multipeak cluster solution with $K$ peaks placed at the vertices of a regular convex $K$-gon centered at $z_0$.

\smallskip
\noindent \textbf{Keywords.} Schrödinger–Bopp–Podolsky system, Lyapunov--Schmidt reduction, semiclassical states, multipeak cluster solutions.

\smallskip
\noindent \textbf{Mathematics Subject Classification.} 35J48, 35B40, 35Q40.
\end{abstract}

\date{\today}
\maketitle
\tableofcontents

\section{Introduction}

This note is concerned with the existence of multipeak cluster solutions to the following \emph{nonlinear Schrödinger--Bopp--Podolsky (SBP) system} in $\real^3$:
\begin{equation}
\label{intro:eqn:SBP}
\begin{cases}
-\eps^2 \Delta u + \parens{V + \phi} u = u \abs{u}^{p - 1};\\
a^2 \Delta^2 \phi - \Delta \phi = 4 \pi u^2,
\end{cases}
\end{equation}
where $\eps > 0$ is sufficiently small, $a > 0$, $1 < p < 5$,
$V \colon \real^3 \to \ooi{0, \infty}$ and we want to solve for
$u, \phi \colon \real^3 \to \real$.

By formally replacing $a = 0$ in \eqref{intro:eqn:SBP}, we recover the well-studied \emph{nonlinear Schrödinger--Poisson system},
\begin{equation}
\label{intro:eqn:SP}
\begin{cases}
-\eps^2 \Delta u + \parens{V + \phi} u = u \abs{u}^{p - 1};\\
- \Delta \phi = 4 \pi u^2,
\end{cases}
\end{equation}
which seems to have been first considered by D'Aprile and Mugnai in \cite{daprileNonExistenceResultsCoupled2004a} and provides the most commonly considered model for coupling the nonlinear Schrödinger equation with the electrostatic self-interaction of charged quantum matter.

The nonlinear SBP system was introduced in the mathematical literature in \cite{daveniaNonlinearSchrodingerEquation2019}, where d'Avenia \& Siciliano established the existence/non-existence of solutions to
\[
\begin{cases}
	-\Delta u+\omega u + q^2 \phi u = u \abs{u}^{p - 1};\\
	a^2 \Delta^2 \phi - \Delta \phi = 4 \pi u^2
\end{cases}
\]
for fixed $a, \omega > 0$ in function of the values of $p$, $q$ and proceeded to prove that, in the radial case, its solutions tend to solutions of the corresponding Schrödinger--Poisson system as
$a \to 0$. Since then, there has been an increasing interest for the SBP system. Let us recall a few developments for related non-autonomous systems in $\real^3$. Results regarding existence and multiplicity may be found in \cite{chenGroundStateSolutions2021, depaularamosConcentratedSolutionsSchrodinger2024, figueiredoMultipleSolutionsSchrodinger2023, liuPositiveSolutionsNonautonomous2023, mascaroPositiveSolutionsSchrodingerbopppodolsky2022, pengExistenceMultiplicitySolutions2022a, tengExistencePositiveBound2021a}; the existence of solutions for critical nonlinearities has already attracted a lot of attention, being considered in \cite{chenCriticalSchrodingerBopp2020, liGroundStateSolutions2020a, liuExistenceAsymptoticBehaviour2022, yangExistenceNontrivialSolution2020, zhuSchrodingerBoppPodolsky2021} and the existence and multiplicity of sign-changing solutions were studied in \cite{huExistenceLeastenergySignchanging2022, wangExistenceMultiplicitySignchanging2022, zhangSignchangingSolutionsClass2024, zhangSignchangingSolutionsSchrodinger2022}.

It was only recently that the existence of multipeak solutions was addressed in the literature. In \cite{wangMultibumpSolutionsSchrodinger2024}, Wang, Wang \& Wang proved that the following SBP system in $\real^3$ admits multipeak solutions:
\begin{equation}
\label{intro:eqn:WangWangWang}
\begin{cases}
- \Delta u + \parens{\lambda V + V_0} u + K \phi u
=
u \abs{u}^{p - 2};\\
a^2 \Delta^2 \phi - \Delta \phi = K u^2,
\end{cases}
\end{equation}
where $4 < p < 6$; $V_0 \in C \parens{\real^3}$;
$V \in C \parens{\real^3, \coi{0, \infty}}$ and
$K \colon \real^3 \to \coi{0, \infty}$. More precisely, they employed the penalization method to show that if $\lambda > 0$ is sufficiently large and $\Omega := \mathrm{int} ~ V^{- 1} \parens{0}$ is a nonempty bounded set with smooth boundary and $m$ connected components, then \eqref{intro:eqn:WangWangWang} has multipeak solutions with $K \leq m$ peaks, each of them in a different connected component of $\Omega$ and the corresponding solutions become increasingly concentrated as
$\lambda \to \infty$.

Our goal in this note is to consider the question of whether a different kind of multipeak solutions exists, that is:
\begin{question}
Does \eqref{intro:eqn:SBP} admit a family of multipeak solutions clustered around a local strict minimum point of $V$?
\end{question}

By means of Lyapunov--Schmidt reduction, we will answer this question positively under technical hypotheses on $V$ which assure that the potential is adequately flat around $z_0$. We remark that, besides a recent publication by the author (\cite{depaularamosConcentratedSolutionsSchrodinger2024}), we are unaware of other articles which employed Lyapunov--Schmidt reduction in the context of the nonlinear SBP system.

Let us develop the preliminaries to state our main result. We begin by introducing our hypotheses on $V$.
\begin{enumerate}
[label=(V$_\arabic*$)]
\item \label{V_1}
We have
$
1
=
V \parens{0}
=
\min V|_{\overline{B_1 \parens{0}}}
<
V \parens{x}
$
for every $x \in \overline{B_1 \parens{0}} \setminus \set{0}$;
\item \label{V_2}
there exist $\alpha > 3 + \sqrt{7}$ and
$g \in C^{3, 1} \parens{\overline{B_1 \parens{0}}, \coi{0, \infty}}$ such that
\begin{itemize}
\item
$\norm{g}_{C^{3, 1}} \leq 1$,
\item
$g'' \parens{0}$ is positive-definite and
\item
$V \parens{x} = 1 + g \parens{x}^\alpha$
for every $x \in \overline{B_1 \parens{0}}$;
\end{itemize}
\item \label{V_3}
$0 < \inf V \leq \sup V < \infty$.
\end{enumerate}
Hypotheses \ref{V_1}, \ref{V_2} are stated in this form to simplify the notation, being clear that they could be respectively replaced with the following hypotheses without substantial changes to the arguments (see \cite[Section 8.3]{ambrosettiNonlinearAnalysisSemilinear2007}):
\begin{enumerate}
[label=(V$'_\arabic*$)]
\item
there exist $z_0 \in \real^3$ and $\rho > 0$ such that
$
V \parens{z_0}
=
\min V|_{\overline{B_\rho \parens{z_0}}}
<
V \parens{x}
$
for every $x \in \overline{B_\rho \parens{z_0}} \setminus \set{z_0}$;
\item
there exist $\alpha > 3 + \sqrt{7}$ and
$
g
\in
C^{3, 1} \parens{\overline{B_\rho \parens{z_0}}, \coi{0, \infty}}
$
such that
\begin{itemize}
\item
$g'' \parens{z_0}$ is positive-definite and
\item
$V \parens{x} = V \parens{z_0} + g \parens{x}^\alpha$
for every $x \in \overline{B_\rho \parens{z_0}}$.
\end{itemize}
\end{enumerate}

In Section \ref{variational}, we show that critical points of the \emph{energy functional} given by
\[
I_\eps \parens{u}
:=
\frac{1}{2}
\int \parens*{\abs{\nabla u}^2 + V_\eps u^2}
+
\frac{\eps^3}{4} \int \parens{\phi_{\eps, u^2} u^2}	
-
\frac{1}{p+1} \norm{u}_{L^{p+1}}^{p+1}
\]
for every $u \in H^1$ are naturally associated with solutions to \eqref{intro:eqn:SBP}, where
\begin{itemize}
\item
$\phi_{\eps, u w} := \kappa \parens{\eps \cdot} \ast \parens{u w}$ for every $u, w \in H^1$ and
\item
$\kappa \parens{x} := \parens{1 - e^{- \frac{\abs{x}}{a}}} / \abs{x}$
for every $x \in \real^3 \setminus \set{0}$.
\end{itemize}
As such, we will most often refer to the critical point equation
$\nabla I_\eps \parens{u} = 0$ instead of \eqref{intro:eqn:SBP}.

We are interested in solutions whose asymptotic profile is given by the sum of translations of the function
$U \colon \real^3 \to \ooi{0, \infty}$ defined as the unique positive radial solution to
\begin{equation}
\label{intro:eqn:NLSE}
\begin{cases}
-\Delta u+u=u\abs{u}^{p-1}	&\text{in}~\real^3;\\
u \parens{x} \to 0			&\text{as}~\abs{x} \to \infty
\end{cases}
\end{equation}
(see the theorem in \cite[p. 23]{kwongUniquenessPositiveSolutions1989}). It will be useful to consider translations of $U$ in function of spherical coordinates as follows:
\[
U_{r, \theta, \varphi} \parens{x}
:=
U \parens*{x - P \parens{r, \theta, \varphi}},
\]
where $P \colon \ooi{0, \infty} \times \real \times \real \to \real^3$
is given by
\[
P \parens{r, \theta, \varphi}
:=
r \parens{
	\cos \theta \sin \varphi,
	\sin \theta \sin \varphi,
	\cos \varphi
}
=
r P \parens{1, \theta, \varphi}
\in \real^3.
\]

We proceed to a statement of our main result.
\begin{thm}
\label{intro:thm}
Given $K \geq 2$, there exist $\eps_K > 0$,
\[
\set{
	w_\eps \in H^1: \nabla I_\eps \parens{w_\eps} = 0
}_{0 < \eps < \eps_K}
\]
and
\[
\set*{
	\parens{r_\eps, \theta_\eps, \varphi_\eps}
	\in
	\ooi{0, \infty} \times \coi{0, 2 \pi} \times \cci{0, \pi}
}_{0 < \eps < \eps_K} 
\]
such that
$\norm{w_\eps - W_{r_\eps, \theta_\eps, \varphi_\eps}}_{H^1} \to 0;$
$\eps r_\eps \to 0$ and $r_\eps \to \infty$ as $\eps \to 0$, where
\[
W_{r_\eps, \theta_\eps, \varphi_\eps}
:=
\sum_{1 \leq j \leq K}
	U_{
		r_\eps,
		\theta_\eps,
		\parens*{\varphi_\eps + \frac{2 \pi}{K} j}
	}.
\]
\end{thm}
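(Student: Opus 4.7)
The plan is to perform a Lyapunov--Schmidt finite-dimensional reduction based on the three-parameter manifold $\mathcal{M} := \set*{W_{r, \theta, \varphi} : r > 0,\ \theta, \varphi \in \real}$. A direct reduction would be $3K$-dimensional (one translation per bump), but either restricting to the closed subspace of $H^1 \parens{\real^3}$ invariant under the cyclic rotation of order $K$ permuting the peaks, or equivalently exploiting the invariance of $I_\eps$ under this rotation, brings it down to three dimensions corresponding to the tangent space $T_W \mathcal{M} = \vspan \set{\partial_r W, \partial_\theta W, \partial_\varphi W}$; by Kwong's nondegeneracy theorem, $I_\eps'' \parens{W}$ has precisely this near-kernel in the symmetric setting. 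Writing $w = W_{r, \theta, \varphi} + \psi$ with $\psi$ orthogonal to $T_W \mathcal{M}$ splits $\nabla I_\eps \parens{w} = 0$ into an infinite-dimensional auxiliary equation and a three-dimensional bifurcation equation.

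Uniform coercivity of $I_\eps'' \parens{W}$ on $T_W \mathcal{M}^\perp$, valid for $\eps$ small, $r$ large and $\eps r$ small, reduces the auxiliary equation to a contraction mapping whose unique fixed point $\psi_\eps \parens{r, \theta, \varphi}$ satisfies
\[
\norm{\psi_\eps}_{H^1}
\lesssim
\norm{\nabla I_\eps \parens{W_{r, \theta, \varphi}}}_{H^{-1}}
\lesssim
\parens*{\eps r}^{2 \alpha}
+
e^{- r \sin \parens{\pi / K}}
+
\eps^3,
\]
with smooth dependence on the parameters via the implicit function theorem. The reduced functional $\Phi_\eps \parens{r, \theta, \varphi} := I_\eps \parens{W + \psi_\eps}$ then admits a schematic expansion
\[
\Phi_\eps
=
K \cdot c_U
+
c_1\, \eps^{2\alpha} r^{2\alpha} F \parens{\theta, \varphi}
-
c_2\, e^{-2 r \sin \parens{\pi / K}}
-
c_3\, \eps^4 r
+
\parens{\text{lower order}},
\]
where $c_U$ is the limiting NLSE energy of $U$; $F \parens{\theta, \varphi} \geq 0$ encodes the alignment of the great circle relative to the principal axes of $g'' \parens{0}$; the exponential is the attractive NLSE interaction between adjacent peaks; and the $\eps^4 r$ term is the leading Bopp--Podolsky repulsion, obtained from the Taylor expansion $\kappa \parens{\eps d} = \tfrac{1}{a} - \tfrac{\eps d}{2 a^2} + O \parens{\eps^2 d^2}$ valid for $\eps d \to 0$. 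In the $r$-direction, the balance $\partial_r \Phi_\eps = 0$ between the NLSE attraction and the nonlocal repulsion pins down $r_\eps \approx \tfrac{2}{\sin \parens{\pi / K}} \log \parens{1/\eps}$, giving $r_\eps \to \infty$ with $\eps r_\eps \to 0$; in the $\parens{\theta, \varphi}$-directions only the potential contributes, and critical orientations arise as the isolated critical points of $F$ obtained from the positive-definiteness of $g'' \parens{0}$.

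The main difficulty is to establish this expansion with enough precision to detect the critical point. The neglected error
$
I_\eps \parens{W + \psi_\eps}
-
I_\eps \parens{W}
-
\tfrac{1}{2} \angles{I_\eps'' \parens{W} \psi_\eps, \psi_\eps}
$
is of order $\norm{\psi_\eps}_{H^1}^2$, and each squared summand of the bound on $\norm{\psi_\eps}_{H^1}$ must be strictly dominated by the leading balance $e^{- 2 r \sin \parens{\pi / K}} \sim \eps^4$ and $\eps^4 r$ in a neighborhood of $r_\eps$. The flatness threshold $\alpha > 3 + \sqrt{7}$ emerges from the quadratic inequality needed to ensure that $\parens*{\eps r_\eps}^{4 \alpha}$ and the cross-interactions between $\psi_\eps$ and the nonlocal term $\phi_{\eps, W^2}$---a channel absent in the Schrödinger--Poisson case---remain strictly subordinate to the bifurcation balance. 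Controlling these nonlocal cross terms with the required precision is the most delicate technical step of the proof.
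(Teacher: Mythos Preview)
Your overall Lyapunov--Schmidt architecture (three-parameter manifold, auxiliary equation solved by contraction, reduced functional) matches the paper. The genuine gap is the mechanism you propose for the critical point of the reduced functional.

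You balance the exponential NLSE attraction $-c_2 e^{-2r\sin(\pi/K)}$ against the Bopp--Podolsky correction $-c_3\eps^4 r$, which forces $r_\eps\sim\log(1/\eps)$. But then your own closure condition fails: the exponential summand in your bound for $\norm{\psi_\eps}$ gives $\parens*{e^{-r\sin(\pi/K)}}^2=e^{-2r\sin(\pi/K)}\sim\eps^4$, which is \emph{exactly} the order of your leading balance, not strictly subordinate to it. So the expansion is not sharp enough to detect the critical point. Worse, a second-derivative check shows your balance is a \emph{maximum} in $r$, so even if the expansion were sharp you would need a min--max or degree argument, not a direct comparison.

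The paper's mechanism is different. It works at polynomial scales $r\sim\eps^{-c}$ with $c\in\ooi{0,1}$, where the NLSE interaction $e^{-cr}$ is smaller than any power of $\eps$ and is simply absorbed into the error (see the estimation of $\parens{\ast\ast}$ in the proof of Lemma~\ref{pseudo:lem:estimate}). The confining force is the \emph{potential} $V$, not the NLSE attraction: the relevant competition is between $V_\eps\parens{P_j}-1$ (which grows with $r$) and the linear correction in $\eps^3\kappa_\eps\parens{rd_{j,k}}\approx\text{const}-\text{const}'\cdot\eps^4 r$ (which decreases with $r$). Rather than solving $\partial_r\Phi_\eps=0$, the paper confines $r$ to a carefully designed interval $R_\eps$ and shows by direct comparison at a test radius $\rho_n=\eps_n^{-(\alpha-\lambda)/(\alpha+1)}$ that the continuous extension $\overline{\Phi_\eps}$ cannot achieve its minimum on $\partial R_\eps$; the minimum is therefore interior and yields the critical point via Lemma~\ref{lem:natural_constraint}.

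Finally, your account of $\alpha>3+\sqrt{7}$ is off. It does not come from nonlocal cross-terms with $\psi_\eps$; it is precisely the condition making the interval $\ooi*{2\frac{\alpha+2}{\alpha-1},\min\parens{2\alpha-8,\alpha}}$ nonempty, so that an auxiliary exponent $\lambda$ can be chosen for which both the boundary comparison in Case~\ref{Case2} of Lemma~\ref{reduced_functional:lem:existence_of_minimum} and the error bound $\eps^{3+\mu}$ in Lemma~\ref{reduced:lem:expansion_of_Ieps} hold simultaneously.
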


We recall that due to \cite[Lemma 2 in p. 329]{berestyckiNonlinearScalarField1983a}, $U$ and $\abs{\nabla U}$ have exponential decay at infinity, i.e., there exists $\eta > 0$  such that
\begin{equation}
\label{intro:eqn:exponential_decay}
U \parens{x}, \abs{\nabla U \parens{x}} \lesssim e^{- \eta \abs{x}}
\quad \quad \text{for every} \quad \quad
x \in \real^3.
\end{equation}
As such, we deduce that given $z \in \real^3$, $U \parens{\cdot - z}$ is a critical point of the functional $I_0 \colon H^1 \to \real$ given by
\[
I_0 \parens{u}
=
\frac{1}{2} \norm{u}_{H^1}^2
-
\frac{1}{p + 1} \norm{u}_{L^{p + 1}}^{p + 1}.
\]

Our strategy to construct solutions to \eqref{intro:eqn:SBP} consists in interpreting $I_\eps$ as a perturbation of $I_0$. As such, we look for critical points of $I_\eps$ obtained as perturbations of
$
\sum_{1 \leq j \leq K}
U_{r, \theta, \parens*{\varphi + \frac{2 \pi}{N} j}}
$
for sufficiently large $r > 0$, which are close enough to critical points of $I_\eps$ due to the exponential decay of $U$ and the fact that \eqref{intro:eqn:NLSE} is invariant by translation (see Lemma \ref{pseudo:lem:estimate}).

Lyapunov--Schmidt reduction has been already employed in \cite{ruizClusterSolutionsSchrodingerPoissonSlater2011, ianniNonradialSignchangingSolutions2015} to construct multipeak cluster solutions to the nonlinear Schrödinger--Poisson system \eqref{intro:eqn:SP} (see also \cite{ianniConcentrationPositiveBound2008} for single-peaked solutions), so let us compare the technical details in the contexts of these two systems, starting with the hypotheses on $V$. Hypotheses \ref{V_1}, \ref{V_3} correspond to \cite[(V1) and (V3)]{ruizClusterSolutionsSchrodingerPoissonSlater2011}. Now, consider \ref{V_2}. We recall that critical points of the functional
$J_\eps \colon H^1 \to \real$ given by
\[
J_\eps \parens{u}
:=
\frac{1}{2} \int \parens*{\abs{\nabla u}^2 + V_\eps u^2}
+
\frac{\eps^2}{4}
\int \int
	\frac{u \parens{x}^2 u \parens{y}^2}{\abs{x - y}}
\dif x \dif y
-
\frac{1}{p+1} \norm{u}_{L^{p+1}}^{p+1}
\]
are naturally associated with solutions to \eqref{intro:eqn:SP}. The higher regularity of $g$ in \ref{V_2} when compared with \cite[(V2)]{ruizClusterSolutionsSchrodingerPoissonSlater2011} is related to the fact that $I_\eps$ is a perturbation of $I_0$ of order
$\eps^3$, while $J_\eps$ is a perturbation of order $\eps^2$. The inequality $\alpha > 3 + \sqrt{7}$ is similarly derived from technical constraints of the problem (see Remark \ref{pseudo:rmk}).

We highlight that our main result does not follow from a straightforward adaptation of the arguments in \cite{ruizClusterSolutionsSchrodingerPoissonSlater2011}, where Ruiz \& Vaira prove the existence of multipeak cluster solutions without considering pseudo-critical points of $J_\eps$ whose peaks are disposed in a specific geometric configuration.

Such an argument is only possible in the framework of Maxwell's electromagnetism (for the precise mathematical description of the phenomenon, see Remark \ref{reduced:rmk:impossible}). Indeed, consider a system with $K$ charged particles repulsing each other through electrostatic interaction. The Coulomb energy of two particles which are sufficiently close one to the other surpasses the Coulomb energy of the interaction involving any of the remaining particles (this phenomenon is usually called the \emph{infinity problem}), and this fact is exploited in the proof of \cite[Proposition 4.4]{ruizClusterSolutionsSchrodingerPoissonSlater2011} when Ruiz \& Vaira consider $\mathbf{P}^\eps$ such that
$
\abs{P_i^\eps - P_j^\eps}
=
\eps^{\delta + \frac{2 - \alpha}{\alpha + 1}}
$.
This does not happen in the Bopp--Podolsky electromagnetism because the energy of interaction between two particles is finite. As such, we can only establish the existence of multipeak cluster solutions by restricting our search to pseudo-critical points of $I_\eps$ whose peaks are disposed in specific geometric configurations, arguing similarly as in \cite{ianniNonradialSignchangingSolutions2015}.

Let us comment on the organization of the text. In Section \ref{variational}, we introduce the relevant variational framework, while we perform the Lyapunov--Schmidt reduction and we prove Theorem \ref{intro:thm} in Section \ref{existence}.

\subsection*{Notation}

Brackets are exclusively employed to enclose the argument of (multi-)linear functions. Given a set $X$ and
$f,g \colon X \to \coi{0, \infty}$, we write
\[
f \parens{x} \lesssim g \parens{x}
\quad \quad \text{for every} \quad \quad
x \in X
\]
when there exists $C > 0$ such that $f \parens{x} \leq C g \parens{x}$ for every $x \in X$. We let $H^1_\eps$ denote the Hilbert space obtained as completion of $C_c^\infty$ with respect to
\[
\angles{u_1 \mid u_2}_{H^1_\eps}
:=
\int\parens{\nabla u_1 \cdot \nabla u_2 + V_\eps u_1 u_2}.
\]
The natural inclusions
$H^1_\eps \hookrightarrow H^1$ and $H^1 \hookrightarrow H^1_\eps$
are continuous due to \ref{V_3}, so $H^1$ and $H^1_\eps$ are canonically isomorphic as Hilbert spaces.

\subsection*{Acknowledgement}

The author thanks João Fernando da Cunha Nariyoshi for conversations about the problem considered in this text. This study was financed in part by the Coordenação de Aperfeiçoamento de Pessoal de Nível Superior - Brasil (CAPES) - Finance Code 001.

\section{Variational framework}
\label{variational}

Given $u_1, u_2 \in H^1$, we say that $\phi$ is a \emph{weak solution} to
\begin{equation}
\label{variational:eqn:BP}
\Delta^2 \phi - \Delta \phi = 4 \pi u_1 u_2
\end{equation}
when $\phi \in X$ and
$\angles{\phi, w}_{X} = 4 \pi \int \parens{u_1 u_2 w}$
for every $w \in C_c^\infty$, where $X$ denotes the Hilbert space obtained as completion of $C_c^\infty$ with respect to
\[
\angles{u_1, u_2}_X
:=
\int\parens{\Delta u_1 \Delta u_2 + \nabla u_1 \cdot \nabla u_2}.
\]
It follows from the Riesz Representation Theorem that \eqref{variational:eqn:BP} has a unique weak solution and we even know its explicit expression,
$\phi_{1, u_1 u_2} = \kappa \ast \parens{u_1 u_2} \in X$ (see \cite[Section 3.1]{daveniaNonlinearSchrodingerEquation2019}).

In this context, $\parens{u, \varphi}$ is said to be a \emph{weak solution} to \eqref{intro:eqn:SBP} when $u \in H^1$; $\varphi = \phi_{1, u^2} \in X$ and
\[
\int \parens*{
	\eps^2 \nabla u \cdot \nabla w + \parens{V + \phi_{1, u^2}} u w
}
=
\int \parens{u \abs{u}^{p - 1} w}
\]
for every $w \in C_c^\infty$. Through the change of variable
$x \mapsto \eps x$, this condition is rewritten as
\begin{equation}
\label{variational:eqn:weak_sol_SBP'}
\int \parens*{
	\nabla u \cdot \nabla w
	+
	\parens{V_\eps + \eps^3 \phi_{\eps, u^2}} u w
}
=
\int \parens{u \abs{u}^{p - 1} w}
\quad
\text{for every}~w \in C_c^\infty.
\end{equation}
where $\phi_{\eps, u_1 u_2} := \kappa_\eps \ast \parens{u_1 u_2}$ and $\kappa_\eps := \kappa \parens{\eps \cdot}$.

We highlight the following computational properties:
\[
\parens{H^1}^4 \ni \parens{u_1, u_2, u_3, u_4}
\mapsto
\int \parens{\phi_{\eps, u_1 u_2} u_3 u_4}
=
\int \parens{\phi_{\eps, u_3 u_4} u_1 u_2}
\in
\real
\]
is multilinear and
\[
\abs*{\int \parens{\phi_{\eps, u_1 u_2} u_3 u_4}}
\lesssim
\norm{u_1}_{L^2} \norm{u_2}_{L^2} \norm{u_3}_{L^2} \norm{u_4}_{L^2}
\]
for every $u_1, u_2, u_3, u_4 \in H^1$ and $\eps > 0$.

The facts that $I_\eps$ is well-defined and of class $C^2$ follow from the previous paragraph. More precisely,
\[
I_\eps' \parens{u} \brackets{w_1}
=
\angles{u, w_1}_{H^1_\eps}
+
\eps^3 \int \parens{\phi_{\eps,u^2} u w_1}
-
\int\parens{u \abs{u}^{p-1} w_1}
\]
and
\begin{multline*}
I_\eps'' \parens{u} \brackets{w_1,w_2}
=
\angles{w_1, w_2}_{H^1_\eps}
+
\eps^3 \int \parens{
	\phi_{\eps, u^2} w_1 w_2 + 2 \phi_{\eps, u w_1} u w_2
}
+
\\
-
p \int \parens*{\abs{u}^{p-1} w_1 w_2}
\end{multline*}
for every $u, w_1, w_2 \in H^1$. The following variational characterization then becomes immediate:
\[
\parens{u, \phi_{\eps, u^2}} \in H^1 \times X
\quad \text{satisfies} \quad \eqref{variational:eqn:weak_sol_SBP'}
\iff
\nabla I_\eps \parens{u} = 0. 
\]
Notice that these weak solutions are classical solutions due to regularity theory (see \cite[Appendix A.1]{daveniaNonlinearSchrodingerEquation2019}).

\section{Existence of cluster solutions}
\label{existence}

The goal of this section is to prove Theorem \ref{intro:thm}. As such, we consider a fixed $K \geq 2$ and we omit the dependence of any quantities on it. Let us introduce a few additional notational conventions. We define
\begin{equation}
\label{existence:eqn:translation_of_varphi}
\varphi_j = \varphi + \frac{2 \pi}{K} j
\end{equation}
for every $j \in \set{1, \ldots, K}$ and $\varphi \in \real$. Given $j, k \in \set{1, \ldots, K}$, we let
\[
d_{j, k}
=
P \parens*{1, 0, \frac{2 \pi}{N} j}
-
P \parens*{1, 0, \frac{2 \pi}{N} k}
\in \real^3,\]
so that
$
d_{j,k} = P \parens{1, \theta, \varphi_j} - P \parens{1, \theta, \varphi_k}
$
for every $\theta, \varphi \in \real$ and, in particular,
\[
\abs{d_{j,k}}
=
\sqrt{2 - 2 \cos \parens*{\frac{2 \pi}{K} \parens{j - k}}}.
\]

\subsection{Pseudo-critical points of $I_\eps$}

Our first goal is to obtain an important decomposition of the critical point equation
\begin{equation}
\label{equivalent:eqn:critical_point}
\nabla I_\eps \parens{u} = 0; \quad \quad u \in H^1
\end{equation}
as a system of equations. Let
\[
\dot{U}^{\rad}_{r, \theta, \varphi}
=
\left.
	\frac{\dif}{\dif R} U_{R, \theta, \varphi}
\right|_{R = r};
~
\dot{U}^{\azi}_{r, \theta, \varphi}
=
\left.
	\frac{\dif}{\dif \Theta} U_{r, \Theta, \varphi}
\right|_{\Theta = \theta};
~
\dot{U}^{\pol}_{r, \theta, \varphi}
=
\left.
	\frac{\dif}{\dif \Phi} U_{r, \theta, \Phi}
\right|_{\Phi = \varphi}
\]
and similarly for $W_{r, \theta, \varphi}$. We use these functions to introduce the orthogonal decomposition
\begin{equation}
\label{pseudo:eqn:decomposition_of_H^1}
H^1 = N_{r, \theta, \varphi} \oplus T_{r, \theta, \varphi},
\end{equation}
where
\[
T_{r, \theta, \varphi}
:=
\vspan \set*{
	\dot{W}^{\rad}_{r, \theta, \varphi}; ~
	\dot{W}^{\azi}_{r, \theta, \varphi}; ~
	\dot{W}^{\pol}_{r, \theta, \varphi}
} \subset H^1
\quad \quad \text{and} \quad \quad
N_{r, \theta, \varphi}
:=
T_{r, \theta, \varphi}^\perp.
\]
As such, we rewrite \eqref{equivalent:eqn:critical_point} as the following system of equations induced by the orthogonal decomposition \eqref{pseudo:eqn:decomposition_of_H^1}:
\[
\begin{cases}
\Pi_{r, \theta, \varphi} \brackets{\nabla I_\eps \parens{u}}
=
0
&\text{\emph{(auxiliary equation)}};
\\
\parens{\id_{H^1} - \Pi_{r, \theta, \varphi}}
\brackets{\nabla I_\eps \parens{u}}
=
0
&\text{\emph{(bifurcation equation)}};
\\
u\in H^1,
\end{cases}
\]
where
$\Pi_{r, \theta, \varphi} \colon H^1 \to N_{r, \theta, \varphi}$ denotes the $H^1$-orthogonal projection.

We are interested in solutions to \eqref{equivalent:eqn:critical_point} according to the \emph{ansatz}
\[
u = W_{r, \theta, \varphi} + n,
\quad \quad \text{where} \quad \quad
n \in N_{r, \theta, \varphi}.
\]
In fact, we will only consider $r > 0$ in a set of admissible distances to $0$ from the peaks of the pseudo-critical points of $I_\eps$.

Before defining this set, we need to fix two quantities. Notice that
\[
0
<
2 \frac{\alpha + 2}{\alpha - 1}
<
\min \parens{2 \alpha - 8, \alpha}
\quad \quad \iff \quad \quad
\alpha > 3 + \sqrt{7},
\]
so \ref{V_2} assures that we can fix
\[
\lambda
\in
\ooi*{
	2\frac{\alpha + 2}{\alpha - 1},
	\min \parens{2 \alpha - 8, \alpha}
}
\quad \quad \text{and} \quad \quad
\beta \in \ooi*{
	0, \frac{\alpha - \lambda}{\alpha + 1}
}.
\]
Let us comment on the technical reasons for these choices.
\begin{rmk}
\label{pseudo:rmk}
We need the inequality
$\lambda > 2 \parens{\alpha + 2} / \parens{\alpha - 1}$ to treat Case \ref{Case2} in the proof of Lemma \ref{reduced_functional:lem:existence_of_minimum}. As such, we obtain $\lambda > 2$ for any $\alpha > 1$. We also need
$\lambda < \min \parens{2 \alpha - 8, 3 \alpha - 10}$
(which follows from $\lambda < 2 \alpha - 8$ when $\alpha \geq 2$)
to obtain an appropriate bound for the error in Lemma \ref{reduced:lem:expansion_of_Ieps}.
\end{rmk}

As such, we define the set of \emph{admissible distances} to 0 as
\[
R_\eps = \set*{
	r > 0:
	\frac{\eps^{\beta}}{\eps^{\frac{\alpha - \lambda}{\alpha + 1}}}
	<
	r
	<
	\frac{1}{\eps^{\frac{\alpha}{\alpha + 1}}}
	\quad \text{and} \quad
	\max_{\abs{x} = 1} V \parens{\eps r x}
	<
	1 + \eps^{\frac{3 \alpha}{\alpha + 1}}
}.
\]
It is easy to check that
$\eps^{- \frac{\alpha - \lambda}{\alpha + 1}} \in R_\eps$ for every
$\eps \in \ooi{0, 1}$, so $R_\eps$ is not empty for sufficiently small $\eps > 0$.

The functions $W_{r, \theta, \varphi}$ are usually called \emph{pseudo-critical points} of $I_\eps$ due to the following estimate.
\begin{lem}
\label{pseudo:lem:estimate}
There exists $\eps_0 > 0$ such that
$
\norm*{\nabla I_\eps \parens{W_{r, \theta, \varphi}}}_{H^1}
\lesssim
\eps^2
$
for every $\eps \in \ooi{0, \eps_0}$ and
$
\parens{r, \theta, \varphi}
\in
R_\eps \times \coi{0, 2 \pi} \times \cci{0, \pi}
$.
\end{lem}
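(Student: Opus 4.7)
The plan is to use the explicit formula for $I_\eps'$ derived in Section \ref{variational}. Writing $U_j := U_{r, \theta, \varphi_j}$ and $W := W_{r, \theta, \varphi}$, I would exploit that each $U_j$ satisfies $-\Delta U_j + U_j = U_j^p$ (by \eqref{intro:eqn:NLSE} and translation invariance) and that $W > 0$ to rewrite
\[
I_\eps' \parens*{W} \brackets*{w}
=
\int \parens*{\sum_{j = 1}^K U_j^p - W^p} w
+
\int \parens*{V_\eps - 1} W w
+
\eps^3 \int \phi_{\eps, W^2} W w
\]
for every $w \in H^1$. Since $\norm{\cdot}_{H^1}$ and $\norm{\cdot}_{H^1_\eps}$ are uniformly equivalent by \ref{V_3}, it suffices to dominate each of the three integrals by $C \eps^2 \norm{w}_{H^1}$ uniformly for $(r, \theta, \varphi) \in R_\eps \times \coi{0, 2 \pi} \times \cci{0, \pi}$ and then take the supremum over unit-norm $w$.

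I would then estimate each term separately. For the \emph{interaction term}, standard pointwise inequalities bound $\abs*{\sum_j U_j^p - W^p}$ by a finite sum of cross products $U_j^{p - 1} U_k$ with $j \neq k$; the exponential decay \eqref{intro:eqn:exponential_decay} of $U$ together with the separation $\abs{P \parens{1, \theta, \varphi_j} - P \parens{1, \theta, \varphi_k}} = \abs{d_{j, k}} > 0$ then yields a bound of order $e^{- \eta' r}$, which is super-polynomially small in $\eps$ because $r > \eps^{\beta - (\alpha - \lambda)/(\alpha + 1)} \to \infty$ as $\eps \to 0$. For the \emph{electrostatic term}, the boundedness of $\kappa$ (with $\kappa \parens{x} \to 1/a$ as $x \to 0$) gives $\norm{\kappa_\eps}_{L^\infty} \leq 1/a$ and thus $\norm{\phi_{\eps, W^2}}_{L^\infty} \leq C \norm{W}_{L^2}^2$, so the $\eps^3$ prefactor makes this contribution $\lesssim \eps^3 \norm{w}_{H^1}$. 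For the \emph{potential term}, Cauchy--Schwarz reduces matters to bounding $\norm{\parens*{V_\eps - 1} W}_{L^2}$: the constraint $\max_{\abs{x} = 1} V \parens{\eps r x} < 1 + \eps^{3 \alpha / (\alpha + 1)}$ built into $R_\eps$ controls $V_\eps - 1$ at each peak, and the Taylor expansion of $g$ at $0$ from \ref{V_2} (which, since $g \parens{0} = 0$, $\nabla g \parens{0} = 0$, and $g'' \parens{0}$ is positive-definite, gives $g \parens{z} \sim \abs{z}^2$ for small $z$) extends this bound to a neighborhood of each peak, while the exponential decay of $U$ absorbs the tails; the net estimate is $\norm{\parens*{V_\eps - 1} W}_{L^2} \lesssim \eps^{3 \alpha / (\alpha + 1)}$, and since $\alpha > 3 + \sqrt{7} > 2$ the exponent exceeds $2$.

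The main obstacle is the potential term: the scaling of the third constraint in $R_\eps$ is tuned precisely so that, together with the quantitative flatness of $V$ at $0$ from \ref{V_2}, it delivers an error that lies comfortably inside the target $\eps^2$. The remaining two terms exploit robust features (translation invariance of \eqref{intro:eqn:NLSE} and boundedness of $\kappa$), and summing the three estimates yields $\norm{\nabla I_\eps \parens{W_{r, \theta, \varphi}}}_{H^1} \lesssim \eps^2$ uniformly in $(r, \theta, \varphi) \in R_\eps \times \coi{0, 2 \pi} \times \cci{0, \pi}$ for $\eps$ below some threshold $\eps_0 > 0$.
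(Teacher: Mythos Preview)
Your proposal is correct and follows the same three-term decomposition as the paper (potential term, nonlinear interaction term, electrostatic term), with the same ingredients driving each estimate: the constraint built into $R_\eps$ together with \ref{V_2} for the potential term, the exponential decay \eqref{intro:eqn:exponential_decay} and peak separation for the interaction term, and the boundedness of $\kappa$ plus the explicit $\eps^3$ prefactor for the electrostatic term.

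The only noteworthy difference is in how the potential term is handled. You apply Cauchy--Schwarz and bound $\norm{(V_\eps-1)W}_{L^2}$ directly, exploiting $V-1=g^\alpha$ and the quadratic vanishing $g(z)\sim\abs{z}^2$ near the origin together with the constraint $g(\eps P_j)^\alpha<\eps^{3\alpha/(\alpha+1)}$, landing on the exponent $3\alpha/(\alpha+1)>2$. The paper instead splits the integral over $B_{(2\eps)^{-1}}(P_j)$ and its complement, then Taylor-expands $V_\eps$ around the peak and uses the derivative bound $\abs{\nabla V(\eps P_j)}\lesssim\eps^{3(\alpha-1)/(\alpha+1)}$ (a consequence of $\nabla V=\alpha g^{\alpha-1}\nabla g$) for the first-order term and a crude $\abs{V''}\lesssim 1$ for the remainder, obtaining $O(\eps^2)$. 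Your route is slightly more direct and even yields a marginally sharper exponent, but both arguments rest on the same structural hypotheses and deliver the required bound.
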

\begin{proof}
As $\nabla I_0 \parens{U_{r, \theta, \varphi_j}} = 0$
for every $j \in \set{1, \ldots, K}$, we deduce that
\begin{multline*}
\angles*{\nabla I_\eps \parens{W_{r, \theta, \varphi}} \mid u}_{H^1}
=
\underbrace{
	\int \parens*{\parens{V_\eps - 1} W_{r, \theta, \varphi} u}
}_{\parens{\ast}}
+
\\
+
\eps^3 \int \parens{\phi_{\eps, W_{r, \theta, \varphi}^2} W_{r, \theta, \varphi} u}
-
\underbrace{
	\int \parens*{
		\parens*{
			W_{r, \theta, \varphi}^p
			-
			\sum_{1 \leq j \leq K} U_{r, \theta, \varphi}^p
		}
		u
	}
}_{\parens{\ast \ast}}.
\end{multline*}

\emph{Estimation of $\parens{\ast}$.}
It suffices to estimate
\begin{multline*}
\int \parens*{\parens{V_\eps - 1} U_{r, \theta, \varphi} u}
=
\\
=
\underbrace{
	\int_{D_{\eps, r, \theta, \varphi}}
	\parens*{\parens{V_\eps - 1} U_{r, \theta, \varphi} u}
}_{\parens{\dagger}}
+
\underbrace{
	\int_{B_{\parens{2 \eps}^{- 1}} \parens*{
		P \parens{r, \theta, \varphi}}
	}
	\parens*{\parens{V_\eps - 1} U_{r, \theta, \varphi} u}
}_{\parens{\dagger \dagger}},
\end{multline*}
where
$
D_{\eps, r, \theta, \varphi}
:=
\real^3
\setminus
B_{\parens{2 \eps}^{- 1}} \parens*{P \parens{r, \theta, \varphi}}
$.
Let us estimate $\parens{\dagger}$. In view of \ref{V_3},
\[
\int_{D_{\eps, r, \theta, \varphi}} \parens*{
	\abs{V_\eps - 1} U_{r, \theta, \varphi} \abs{u}
}
\leq
\parens{\sup V + 1}
\parens*{
	\int_{\real^3 \setminus B_{\parens{2 \eps}^{- 1}} \parens*{0}} U^2
}^{1 / 2}
\norm{u}_{L^2}.
\]
Due to \eqref{intro:eqn:exponential_decay}, we have
$
\int_{\real^3 \setminus B_{\parens{2 \eps}^{- 1}} \parens{0}} U^2
\lesssim
e^{- 2 \eta / \eps} / \eps^2
$
for every $\eps \in \ooi{0, \eps_0}$.

Now, we estimate $\parens{\dagger \dagger}$. As
\[
B_{1 / 2} \parens{\eps P \parens{r, \theta, \varphi}}
\subset
\overline{B_1 \parens{0}},
\]
we deduce that
$\abs{V'' \parens{x}} \lesssim 1$
for every
$\eps \in \ooi{0, \eps_0}$,
$
\parens{r, \theta, \varphi}
\in
R_\eps \times \coi{0, 2 \pi} \times \cci{0, \pi}
$
and
$x \in B_{1 / 2} \parens{\eps P \parens{r, \theta, \varphi}}$.
Therefore, it follows from \ref{V_2}, \eqref{intro:eqn:exponential_decay} and a Taylor expansion that
\begin{multline*}
\int_{D_{\eps, r, \theta, \varphi}}
	\parens*{\abs{V_\eps - 1} U_{r, \theta, \varphi} \abs{u}}
\lesssim
\\
\lesssim
\eps
\abs*{\nabla V \parens*{\eps P \parens{r, \theta, \varphi}}}
\int_{B_{\parens{2 \eps}^{- 1}} \parens*{
	P \parens{r, \theta, \varphi}
	}}
	\abs{x - P \parens{r, \theta, \varphi}}
	U_{r, \theta, \varphi} \parens{x}
	\abs{u \parens{x}}
\dif x
+
\\
+
\eps^2 \norm{u}_{L^2}
\end{multline*}
for every
$\eps \in \ooi{0, \eps_0}$,
$
\parens{r, \theta, \varphi}
\in
R_\eps \times \coi{0, 2 \pi} \times \cci{0, \pi}
$
and $u \in H^1$.
We have $3 \parens{\alpha - 1} / \parens{\alpha + 1} > 1$ and \ref{V_2} implies
\[
\abs*{\nabla V \parens*{\eps P \parens{r, \theta, \varphi}}}
\lesssim
\eps^{3 \frac{\alpha - 1}{\alpha + 1}}
\]
for every $\eps \in \ooi{0, \eps_0}$ and
$
\parens{r, \theta, \varphi}
\in
R_\eps \times \coi{0, 2 \pi} \times \cci{0, \pi}
$,
hence the estimate.

\emph{Estimation of $\parens{\ast \ast}$.}
Fix $k > 2 / \abs{d_{2, 1}}$ and given $j \in \set{1, \ldots, K}$, let
\[
\Omega_{\eps, r, \theta, \varphi_j}
=
\set*{
	x \in \real^3:
	k \abs*{x - P \parens{r, \theta, \varphi_j}}
	<
	\frac{\eps^\beta}{\eps^{\frac{\alpha - \lambda}{\alpha + 1}}}
}
\]
and let
$
\Omega_{\eps, 0}
=
\real^3
\setminus
\bigcup_{1 \leq j \leq K} \Omega_{\eps, r, \theta, \varphi_j}
$.
Let us estimate
\[
\parens{\dagger}
:=
\int_{\Omega_{\eps, 0}} \parens*{
	\parens*{
		W_{r, \theta, \varphi}^p
		-
		\sum_{1 \leq j \leq K} U_{r, \theta, \varphi_j}^p
	}
	u
}.
\]
We have
\begin{multline*}
\abs*{\parens{\dagger}}
\leq
\parens{2^p + 1}
\sum_{1 \leq j \leq K}
\int_{\Omega_{\eps, 0}} \parens*{
	U_{r, \theta, \varphi_j}^p u
}
\leq
\\
\leq
\parens{2^p + 1}
\sum_{1 \leq j \leq K}
\parens*{
	\int_{\Omega_{\eps, 0}} U_{r, \theta, \varphi_j}^{2 p}
}^{1 / 2}
\norm{u}_{L^2}.
\end{multline*}
In view of \eqref{intro:eqn:exponential_decay},
\begin{align*}
\sum_{1 \leq j \leq K}
\int_{\Omega_{\eps, 0}} U_{r, \theta, \varphi_j}^{2 p}
&\lesssim
\sum_{1 \leq j \leq K}
\int_{\Omega_{\eps, 0}}
	\exp \parens*{
		- 2 p \eta \abs{x - P \parens{r, \theta, \varphi_j}}
	}
\dif x;
\\
&\lesssim
\exp \parens*{
	- 2 \frac{p}{k} \cdot
	\frac{\eps^\beta}{\eps^{\frac{\alpha - \lambda}{\alpha + 1}}}
}
\end{align*}
for every $\eps \in \ooi{0, \eps_0}$ and
$
\parens{r, \theta, \varphi}
\in
R_\eps \times \coi{0, 2 \pi} \times \cci{0, \pi}
$.
Now, we estimate
\[
\int_{\Omega_{\eps, r, \theta, \varphi_k}} \parens*{
	\parens*{
		W_{r, \theta, \varphi}^p
		-
		\sum_{1 \leq j \leq K} U_{r, \theta, \varphi_j}^p
	}
	u
}.
\]
The function
$f \colon \cci{0, K U \parens{0}} \to \coi{0, \infty}$
given by
$f \parens{t} = t^p$
is Lipschitz, so
\begin{multline*}
\abs*{
	\int_{\Omega_{\eps, r, \theta, \varphi_k}} \parens*{
		\parens*{
			W_{r, \theta, \varphi}^p
			-
			U_{r, \theta, \varphi_k}^p
			-
			\sum_{\substack{1 \leq j \leq K; \\ j \neq k}}
				U_{r, \theta, \varphi_j}^p
		}
		u
	}
}
\leq
\\
\leq
\int_{\Omega_{\eps, r, \theta, \varphi_k}} \parens*{
	\parens*{
		\norm{f}_{C^{0, 1}} \parens*{
			\sum_{\substack{1 \leq j \leq K; \\ j \neq k}}
				U_{r, \theta, \varphi_j}
		}
		+
		\sum_{\substack{1 \leq j \leq K; \\ j \neq k}}
			U_{r, \theta, \varphi_j}^p
	}
	\abs{u}
}
\leq
\\
\leq
2 \norm{f}_{C^{0, 1}}
\sum_{\substack{1 \leq j \leq K; \\ j \neq k}}
\int_{\Omega_{\eps, r, \theta, \varphi_k}} \parens*{
	U_{r, \theta, \varphi_j} \abs{u}
}
\leq
\\
\leq
2 \norm{f}_{C^{0, 1}}
\norm{u}_{L^2}
\sum_{\substack{1 \leq j \leq K; \\ j \neq k}}
	\norm{U_{r, \theta, \varphi_j}}_{
		L^2 \parens{\Omega_{\eps, r, \theta, \varphi_k}}
	}.
\end{multline*}
It follows from \eqref{intro:eqn:exponential_decay} that
\[
\norm{U_{r, \theta, \varphi_j}}_{
	L^2 \parens{\Omega_{\eps, r, \theta, \varphi_k}}
}
\lesssim
\exp \parens*{
	-
	\eta
	\parens*{\abs{d_{j, k}} - \frac{1}{k}}
	\frac{\eps^\beta}
	{\eps^{\parens{\alpha - \lambda} / \parens{\alpha + 1}}}
}
\]
for every $\eps \in \ooi{0, \eps_0}$,
$
\parens{r, \theta, \varphi}
\in
R_\eps \times \coi{0, 2 \pi} \times \cci{0, \pi}
$
and $j \neq k$ in $\set{1, \ldots, K}$, hence the result.
\end{proof}

\subsection{Solving the auxiliary equation}
\label{sect:aux}

In this section, we argue similarly as in \cite[Section 3]{ruizClusterSolutionsSchrodingerPoissonSlater2011} (or \cite[Section 3.3]{ianniConcentrationPositiveBound2008}) to solve the auxiliary equation for sufficiently small $\eps$. More precisely, our present goal is to prove the following result.

\begin{lem}
\label{solving:lem:solution_auxiliary}
We can fix $\eps_0 > 0$ such that if $0 < \eps < \eps_0$, then there exists a mapping of class $C^1$,
\[
R_\eps \times \coi{0, 2 \pi} \times \cci{0, \pi}
\ni \parens{r, \theta, \varphi}
\mapsto
n_{\eps, r, \theta, \varphi} \in H^1,
\]
such that
\[
n_{\eps, r, \theta, \varphi} \in N_{r, \theta, \varphi};
\quad
\Pi_{r, \theta, \varphi} \brackets*{
	\nabla I_\eps \parens{
		W_{r, \theta, \varphi} + n_{\eps, r, \theta, \varphi}
	}
} = 0
\]
and
$\norm{n_{\eps, r, \theta, \varphi}}_{H^1} \lesssim \eps^2$
for every $\eps \in \ooi{0, \eps_0}$ and
$
\parens{r, \theta, \varphi}
\in
R_\eps \times \coi{0, 2 \pi} \times \cci{0, \pi}
$.
\end{lem}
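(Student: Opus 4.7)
The plan is to carry out the standard Lyapunov--Schmidt reduction procedure: we rewrite the auxiliary equation as a fixed-point problem on a small ball in $N_{r,\theta,\varphi}$ and solve it via the contraction mapping (equivalently, the implicit function) theorem. First I would expand
\[
\Pi_{r,\theta,\varphi} \brackets*{\nabla I_\eps \parens{W_{r,\theta,\varphi} + n}}
=
\Pi_{r,\theta,\varphi} \brackets*{\nabla I_\eps \parens{W_{r,\theta,\varphi}}}
+
L_{\eps, r, \theta, \varphi} \brackets{n}
+
R_{\eps, r, \theta, \varphi} \parens{n},
\]
where $L_{\eps, r, \theta, \varphi} \colon N_{r,\theta,\varphi} \to N_{r,\theta,\varphi}$ is the linearization
$\Pi_{r, \theta, \varphi} \circ I_\eps'' \parens{W_{r, \theta, \varphi}}$ restricted to $N_{r,\theta,\varphi}$ and $R_{\eps, r, \theta, \varphi}$ is the nonlinear remainder. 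The inhomogeneous term is already controlled by Lemma \ref{pseudo:lem:estimate}: $\norm{\Pi_{r,\theta,\varphi}[\nabla I_\eps(W_{r,\theta,\varphi})]}_{H^1} \lesssim \eps^2$. Hence, if one can invert $L_{\eps, r, \theta, \varphi}$ with a bound uniform in the admissible parameters, the fixed-point map
$n \mapsto -L_{\eps, r, \theta, \varphi}^{-1} [\Pi_{r,\theta,\varphi}[\nabla I_\eps(W_{r,\theta,\varphi})] + R_{\eps, r, \theta, \varphi}(n)]$
contracts on a ball of radius $\lesssim \eps^2$ and provides the desired solution $n_{\eps, r, \theta, \varphi}$.

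The heart of the argument, and the main obstacle, is establishing the uniform invertibility of $L_{\eps, r, \theta, \varphi}$ on $N_{r,\theta,\varphi}$. I would prove it by comparing $L_{\eps, r, \theta, \varphi}$ with the limiting operator $L_{0, r, \theta, \varphi} := \Pi_{r, \theta, \varphi} \circ I_0''(W_{r, \theta, \varphi})$ and showing the difference is small in operator norm. For the latter: the perturbation from $V_\eps - 1$ is controlled using \ref{V_2} exactly as in the estimate of the term $(\ast)$ in Lemma \ref{pseudo:lem:estimate}, while the Bopp--Podolsky term $\eps^3 \int (\phi_{\eps, W^2} \cdot) + 2 \eps^3 \int (\phi_{\eps, W \cdot} W \cdot)$ is a perturbation of order $\eps^3$ thanks to the multilinear $L^2$-boundedness recalled in Section \ref{variational}. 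For $L_{0, r, \theta, \varphi}$, I would appeal to the non-degeneracy of $U$ (Kwong), according to which $\ker I_0''(U) = \vspan\set{\partial_{x_i} U : i = 1, 2, 3}$; working in the $\mathbb Z_K$-symmetric subspace of $H^1$ naturally induced by the cyclic ansatz (which is preserved by $L_\eps$ because the peaks sit at the vertices of a regular $K$-gon), the approximate kernel of $I_0''(W_{r, \theta, \varphi})$ becomes exactly 3-dimensional and, up to exponentially small errors controlled via \eqref{intro:eqn:exponential_decay} and $r \in R_\eps$, coincides with $T_{r, \theta, \varphi}$. The orthogonal projection onto $N_{r, \theta, \varphi}$ thus kills the near-kernel, and a standard compactness/contradiction argument yields the uniform bound $\norm{L_{\eps, r, \theta, \varphi}^{-1}} \lesssim 1$ for small $\eps$ and $r \in R_\eps$.

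Once invertibility is established, the nonlinear remainder is standard: using the multilinearity of the Bopp--Podolsky four-form together with $H^1 \hookrightarrow L^{p+1}$ and the local Lipschitz behavior of $u \mapsto u \abs{u}^{p-1}$, one gets $\norm{R_{\eps, r, \theta, \varphi}(n_1) - R_{\eps, r, \theta, \varphi}(n_2)}_{H^1} \lesssim (\norm{n_1}_{H^1} + \norm{n_2}_{H^1}) \norm{n_1 - n_2}_{H^1}$ on bounded sets, so the contraction property holds on the ball of radius $C \eps^2$ for a suitable $C$. The $C^1$-dependence on $(r, \theta, \varphi)$ follows directly from the implicit function theorem applied to the jointly $C^1$ map $(r, \theta, \varphi, n) \mapsto \Pi_{r, \theta, \varphi}[\nabla I_\eps(W_{r, \theta, \varphi} + n)]$, since $L_{\eps, r, \theta, \varphi}$ is its partial differential in $n$ and has been shown to be an isomorphism. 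The hardest technical point, as anticipated, is keeping the invertibility bound uniform in the large admissible window $R_\eps$ despite the fact that $r \to \infty$ as $\eps \to 0$: this is precisely where the exponential decay of $U$ in \eqref{intro:eqn:exponential_decay} combined with the lower bound $r > \eps^\beta / \eps^{(\alpha - \lambda)/(\alpha + 1)}$ on the inter-peak distances ensures that the off-diagonal interactions between different bumps are negligible at the linearized level.
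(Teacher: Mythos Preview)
Your overall scheme --- linearize, invert the projected Hessian uniformly, run a contraction on a ball of size $O(\eps^2)$, and then invoke the implicit function theorem for $C^1$ dependence --- is precisely the architecture the paper follows through Lemmas~\ref{lem:bound_inverse_L}, \ref{auxiliary:lem:estimate_second_derivative} and \ref{auxiliary:lem:Banach}. One minor correction: for $1<p<2$ the map $u\mapsto u\abs{u}^{p-1}$ is only H\"older-$C^1$, so your Lipschitz bound on the remainder should carry an additional $\norm{n}_{H^1}^{p-1}$ term, exactly as in the paper's Lemma~\ref{auxiliary:lem:estimate_second_derivative}; the contraction then still closes once $\eps$ is small.

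The substantive difference is in the invertibility step. The paper does \emph{not} pass to a $\mathbb Z_K$-symmetric subspace and does not argue by compactness/contradiction; it invokes an abstract Lax--Milgram criterion (Lemma~\ref{lem:LaxMilgram}): with $A_{r,\theta,\varphi}=\vspan\{\Pi_{r,\theta,\varphi}[W_{r,\theta,\varphi}]\}$, one checks via Lemma~\ref{auxiliary:lem:ambrosetti_malchiodi} and Lemma~\ref{auxiliary:lem:estimates_from_exponential} that the quadratic form of $L_{r,\theta,\varphi}$ is uniformly negative on $A_{r,\theta,\varphi}$ and uniformly positive on $A_{r,\theta,\varphi}^\perp\subset N_{r,\theta,\varphi}$, the latter by reducing to the single-bump nondegeneracy and the exponential separation of the peaks; a uniform bound on $\norm{L_{r,\theta,\varphi}^{-1}}$ then drops out directly.

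Your proposed symmetry route has a gap as written. Nothing in \ref{V_1}--\ref{V_3} forces $V$ to be invariant under the $\mathbb Z_K$ rotation about the axis of the polygon, so neither $I_\eps$ nor its second variation preserves the symmetric subspace: the term $\int(V_\eps-1)\,u\,w$ breaks it. Consequently you can neither close the fixed-point problem inside the $\mathbb Z_K$-invariant functions (and then appeal to symmetric criticality) nor block-diagonalize $L_{\eps,r,\theta,\varphi}$ along a symmetric/antisymmetric splitting to run a compactness argument block by block. The paper instead works on the full $N_{r,\theta,\varphi}$ via the direct quadratic-form argument above.
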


Several preliminaries are needed to prove Lemma \ref{solving:lem:solution_auxiliary}. We begin by letting
$
L_{r, \theta, \varphi}
\colon
N_{r, \theta, \varphi} \to N_{r, \theta, \varphi}
$
be given by
\[
L_{r, \theta, \varphi} \brackets{n}
=
\Pi_{r, \theta, \varphi} \circ \R \brackets{
	I_\eps'' \parens{W_{r, \theta, \varphi}} \brackets{n, \cdot}
}
\]
where $\R \colon H^{- 1} \to H^1$ denotes the Riesz isomorphism.

Our first goal is to obtain sufficient conditions to ``uniformly invert'' $L_{r, \theta, \varphi}$. We need three preliminary lemmas to do so. The first one is the following corollary of the Lax--Milgram Theorem.
\begin{lem}[{\cite[Lemma 3.3]{ianniConcentrationPositiveBound2008}}] \label{lem:LaxMilgram}
Let $H$ be a Hilbert space and let $L \colon H \to H$ be a self-adjoint linear operator. Suppose that $A$ is a finite-dimensional linear subspace of $H$, $c > 0$,
\begin{itemize}
\item
$\angles{L h \mid h}_H \leq -c \norm{h}_H^2$ for every $h \in A$ and
\item
$\angles{L h \mid h}_H \geq c \norm{h}_H^2$ for every $h \in A^\perp$.
\end{itemize}
We conclude that $L$ is invertible and
$\norm{L^{-1}}_{\mathcal{L} \parens{H}} \leq 1/c$.
\end{lem}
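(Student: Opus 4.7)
The plan is to reduce to an application of the Lax--Milgram theorem after twisting $L$ by a suitable involution. Let $P_A, P_{A^\perp}$ denote the orthogonal projections onto $A$ and $A^\perp$ (well-defined because $A$ is closed, being finite-dimensional) and set $T := P_{A^\perp} - P_A \in \End \parens{H}$. Immediate from the definition, $T$ is self-adjoint and $T^2 = \id_H$, so $T$ is an isometry of $H$. I also note that $L$ is automatically bounded by the Hellinger--Toeplitz theorem, being an everywhere-defined self-adjoint operator on a Hilbert space; this justifies the manipulations below.

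Consider then the bilinear form on $H$ given by
\[
b \parens{h_1, h_2} := \angles{L h_1 \mid T h_2}_H,
\]
which is bounded by boundedness of $L$ and $T$. For coercivity, I decompose a given $h \in H$ as $h = a + a'$ with $a \in A$ and $a' \in A^\perp$. Expanding $b \parens{h, h}$ produces the two diagonal terms $- \angles{L a \mid a}_H$ and $\angles{L a' \mid a'}_H$ together with the cross terms $\angles{L a \mid a'}_H - \angles{L a' \mid a}_H$, and the latter vanish by self-adjointness of $L$. Combining with the two hypotheses on $L$ and the orthogonality of $a$ and $a'$ yields $b \parens{h, h} \geq c \parens{\norm{a}_H^2 + \norm{a'}_H^2} = c \norm{h}_H^2$.

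By Lax--Milgram, the operator $S \in \End \parens{H}$ characterized by $\angles{S h_1 \mid h_2}_H = b \parens{h_1, h_2}$ is a bijection with $\norm{S^{-1}}_{\End \parens{H}} \leq 1 / c$. Since $T$ is self-adjoint, $S = T L$; and since $T^{-1} = T$ is an isometry, it follows that $L = T S$ is itself a bijection with
\[
\norm{L^{-1}}_{\End \parens{H}}
= \norm{S^{-1} T}_{\End \parens{H}}
\leq \frac{1}{c}.
\]
The only genuine subtlety is the cancellation of the cross terms in the coercivity computation, which uses the self-adjointness of $L$ in an essential way; the rest is a routine assembly.
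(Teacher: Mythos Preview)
Your argument is correct. The paper does not supply its own proof of this lemma---it is quoted from \cite{ianniConcentrationPositiveBound2008} and described only as ``a corollary of the Lax--Milgram Theorem''---and your proof is precisely a clean realization of that description: the twist $T = P_{A^\perp} - P_A$ turns the indefinite quadratic form $\angles{L\cdot\mid\cdot}$ into a coercive bilinear form, after which Lax--Milgram and the isometry of $T$ give both invertibility and the norm bound.
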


The second lemma is a collection of well-known computational results about $I_0'' \parens{U}$.

\begin{lem}[{\cite[Lemma 8.6]{ambrosettiNonlinearAnalysisSemilinear2007}}] \label{auxiliary:lem:ambrosetti_malchiodi}
We have
\begin{enumerate}
\item
$
I_0'' \parens{U} \brackets{U, U} = \parens{1 - p} \norm{U}_{H^1}^2 < 0
$;
\item
$
\ker I_0'' \parens{U}
=
\vspan \set{\partial_i U : i \in \set{1, 2, 3}}
$;
\item
$I_0'' \parens{U} \brackets{w, w} \gtrsim \norm{w}_{H^1}^2$ for every
$w \perp \parens{\ker I_0'' \parens{U} \oplus \vspan \set{U}}$.
\end{enumerate}
\end{lem}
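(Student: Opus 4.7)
My approach is to reduce the three claims to the spectral theory of the linearized operator $L := - \Delta + 1 - p U^{p - 1}$ acting on $L^2 \parens{\real^3}$, exploiting its explicit relation to $I_0'' \parens{U}$: namely,
\begin{equation*}
I_0'' \parens{U} \brackets{w_1, w_2}
=
\angles{w_1, w_2}_{H^1} - p \int U^{p - 1} w_1 w_2
=
\int \parens{L w_1} w_2
\end{equation*}
for sufficiently smooth $w_1, w_2$. Part (1) is then a one-line computation: testing $I_0' \parens{U} = 0$ against $U$ itself yields the Nehari-type identity $\norm{U}_{H^1}^2 = \int U^{p + 1}$, so $I_0'' \parens{U} \brackets{U, U} = \norm{U}_{H^1}^2 - p \int U^{p + 1} = \parens{1 - p} \norm{U}_{H^1}^2$, which is negative because $p > 1$.

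For Part (2), differentiating the Euler--Lagrange equation $- \Delta U + U = U^p$ with respect to $x_i$ gives $L \brackets{\partial_i U} = 0$, which establishes the inclusion ``$\supset$''. The converse is the nondegeneracy of $U$ as a solution to \eqref{intro:eqn:NLSE} and follows from the same radial Sturm--Liouville / spherical-harmonic analysis that underlies the uniqueness theorem on p.~23 of \cite{kwongUniquenessPositiveSolutions1989}. Concretely, I would decompose the action of $L$ along spherical harmonics and show by a Sturmian argument on each radial reduced operator that the only angular sector producing a zero mode is $\ell = 1$, whose three-dimensional zero eigenspace is exactly accounted for by the translational directions $\partial_1 U, \partial_2 U, \partial_3 U$.

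The main obstacle is Part (3), where I would use the remaining output of this spherical-harmonic bookkeeping to pin down the full spectral picture: the radial sector $\ell = 0$ contributes exactly one simple negative eigenvalue of $L$, and every sector $\ell \geq 2$ is strictly positive and uniformly bounded away from zero. Therefore $I_0'' \parens{U}$, viewed as a bounded self-adjoint operator on $H^1$ through the Riesz isomorphism, has Morse index exactly $1$, a three-dimensional kernel spanned by $\partial_1 U, \partial_2 U, \partial_3 U$, and positive spectrum bounded below by some $c > 0$ on the remaining $H^1$-orthogonal subspace. Combining this with Part (1), which certifies that $U$ itself realises the negative cone of the quadratic form (and is linearly independent from the kernel, since $I_0'' \parens{U} \brackets{U, U} \neq 0$), a Courant / min--max comparison forces $\vspan \set{U}$ to span the entire negative direction modulo the kernel. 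This yields the desired coercivity $I_0'' \parens{U} \brackets{w, w} \gtrsim \norm{w}_{H^1}^2$ for every $w$ that is $H^1$-orthogonal to $\ker I_0'' \parens{U} \oplus \vspan \set{U}$.
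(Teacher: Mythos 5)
The paper gives no proof of this lemma at all: it is imported verbatim from Ambrosetti--Malchiodi (Lemma 8.6), so the comparison to make is between your sketch and the standard argument behind that citation, which is essentially what you reproduce. Parts (1) and (2) are fine: (1) is the Nehari identity obtained by testing $I_0' \parens{U} = 0$ against $U$, and in (2) the inclusion $\supset$ follows from differentiating the equation, while the reverse inclusion is precisely the nondegeneracy of the ground state --- a genuinely nontrivial input (Kwong, Weinstein) that both you and the cited text ultimately take from the spherical-harmonics/Sturm--Liouville analysis, and which you could legitimately cite rather than redo. The one step in (3) you should make explicit is the ``Courant/min--max comparison'': $U$ is not an eigenfunction of $L$, and two vectors on which a quadratic form is negative need not span a negative-definite plane, so ``Morse index one plus $I_0'' \parens{U} \brackets{U, U} < 0$'' does not by itself give nonnegativity on the $H^1$-orthogonal complement. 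It does work here for a specific reason: since $-\Delta U + U = U^p$, the constraint $\angles{w \mid U}_{H^1} = 0$ is equivalent to $\int U^p w = 0$, which makes the cross term $I_0'' \parens{U} \brackets{U, w} = \angles{U \mid w}_{H^1} - p \int U^p w$ vanish; hence any $w$ in the constraint set with $I_0'' \parens{U} \brackets{w, w} < 0$ would yield a two-dimensional subspace $\vspan \set{U, w}$ on which the form is negative definite, contradicting Morse index one. Finally, upgrading nonnegativity to the coercivity $\gtrsim \norm{w}_{H^1}^2$ is not purely formal either: one uses that the Riesz representative of $I_0'' \parens{U}$ on $H^1$ is the identity minus a compact operator (so its essential spectrum is $\set{1}$ and the infimum over the constrained unit sphere is attained), and then checks via the same cross-term computation and $\ker I_0'' \parens{U} \perp_{H^1} U$ that the Lagrange multipliers vanish, so a zero infimum would produce a kernel element orthogonal to the kernel. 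With those two points spelled out, your proof is correct and is the standard one underlying the reference the paper relies on.
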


The third lemma collects estimates that follow from the exponential decay \eqref{intro:eqn:exponential_decay}.

\begin{lem}
\label{auxiliary:lem:estimates_from_exponential}
We have
\begin{multline*}
\abs*{
	\angles{
		U_{r, \theta, \varphi_j}
		\mid
		U_{r, \theta, \varphi_k}
	}_{H^1}
},
\,\,\,
\abs*{
	\angles{
		\dot{U}^{\dir_1}_{r, \theta, \varphi_j}
		\mid
		U_{r, \theta, \varphi_k}
	}_{H^1}
},
\\
\abs*{
	\angles{
		\dot{U}^{\dir_1}_{r, \theta, \varphi_j}
		\mid
		\dot{U}^{\dir_2}_{r, \theta, \varphi_k}
	}_{H^1}
}
\lesssim
\exp \parens{- \gamma \sigma \eta r}
\end{multline*}
for every $\dir_1, \dir_2 \in \set{\rad, \azi, \pol}$,
$j \neq k$ in $\set{1, \ldots, K}$ and
$
\parens{r, \theta, \varphi}
\in
\ooi{0, \infty} \times \coi{0, 2 \pi} \times \cci{0, \pi}
$,
where
$\sigma := \min \parens{1, p - 1}$ and $\gamma := \abs{d_{2, 1}}$.
\end{lem}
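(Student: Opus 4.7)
The plan is to reduce each $H^1$ inner product to an integral of the form $\int F \parens{x - P_j} G \parens{x - P_k} \dif x$ with $F, G$ exponentially decaying, and then to extract an exponential factor in $r$ from the fact that $\abs{P_j - P_k} = r \abs{d_{j, k}} \geq r \gamma$, where $P_j := P \parens{r, \theta, \varphi_j}$. Indeed, $\gamma = \abs{d_{2, 1}}$ minimizes $\abs{d_{j, k}}$ over distinct $j, k \in \set{1, \ldots, K}$, since $\abs{d_{j, k}}^2 = 2 - 2 \cos \parens{2 \pi \parens{j - k} / K}$ is smallest when $\abs{j - k} = 1$.

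First I would unpack the spherical-coordinate derivatives via the chain rule: $\dot{U}^{\rad}_{r, \theta, \varphi_j} = - P \parens{1, \theta, \varphi_j} \cdot \nabla U \parens{\cdot - P_j}$, while $\dot{U}^{\azi}_{r, \theta, \varphi_j}$ and $\dot{U}^{\pol}_{r, \theta, \varphi_j}$ equal $r$ times analogous bounded-coefficient linear combinations of the translated partial derivatives $\parens{\partial_i U} \parens{\cdot - P_j}$. Since $- \Delta U + U = U^p$, differentiating gives $- \Delta \parens{\partial_i U} + \partial_i U = p U^{p - 1} \partial_i U$, and integration by parts yields, writing $U_j := U_{r, \theta, \varphi_j}$,
\[
\angles{U_j \mid U_k}_{H^1} = \int U_j^p U_k,
\quad
\angles{\partial_i U_j \mid U_k}_{H^1} = \int \parens{\partial_i U_j} U_k^p,
\]
and
\[
\angles{\partial_i U_j \mid \partial_l U_k}_{H^1} = p \int \parens{\partial_i U_j} U_k^{p - 1} \partial_l U_k.
\]
In each case the integrand has the form $F \parens{x - P_j} G \parens{x - P_k}$ with $F, G$ pointwise decaying at a rate no slower than $\sigma \eta$; the critical factor is $U^{p - 1}$, which decays at rate $\parens{p - 1} \eta$ (so $\sigma = \min \parens{1, p - 1}$ is precisely the worst-case rate).

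Next I would apply the triangle inequality $\abs{x - P_j} + \abs{x - P_k} \geq \abs{P_j - P_k}$ to write
\[
\abs*{F \parens{x - P_j} G \parens{x - P_k}}
\lesssim
e^{- \sigma \eta \abs{P_j - P_k}}
\cdot
e^{- \sigma \eta \parens*{\abs{x - P_j} + \abs{x - P_k} - \abs{P_j - P_k}}}.
\]
The second factor is integrable over $\real^3$ with $L^1$-norm polynomial in $\abs{P_j - P_k}$ (indeed, it is essentially supported on a fixed-radius tube around the segment joining $P_j$ and $P_k$, a region of volume $\lesssim \parens{1 + r}^2$). Combined with the at-most-linear-in-$r$ prefactors inherited from $\dot{U}^{\azi}$ and $\dot{U}^{\pol}$, this yields $\abs{\angles{\cdots}_{H^1}} \lesssim \parens{1 + r}^C e^{- \sigma \eta \gamma r}$ for some fixed $C > 0$; one absorbs the polynomial prefactor by trading a small amount of decay (i.e., replacing $\eta$ by any smaller positive constant, which remains a valid exponential decay rate for $U$ and $\abs{\nabla U}$) to obtain the stated estimate. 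I expect the only genuine difficulty to be the careful bookkeeping of decay rates after integration by parts---in particular tracking the $U^{p - 1}$ factor, whose rate degenerates as $p \searrow 1$ and thereby forces the exponent $\sigma = \min \parens{1, p - 1}$ in the conclusion.
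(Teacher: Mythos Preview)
The paper does not actually supply a proof of this lemma; it is stated immediately after the sentence ``The third lemma collects estimates that follow from the exponential decay \eqref{intro:eqn:exponential_decay}'' and left at that. Your proposal correctly fills in the routine details the paper omits: reduce the $H^1$ pairings to convolution-type integrals via the identities $-\Delta U + U = U^p$ and $-\Delta \partial_i U + \partial_i U = p\,U^{p-1}\partial_i U$, then use $\abs{P_j - P_k} = r\abs{d_{j,k}} \geq \gamma r$ together with the pointwise bounds from \eqref{intro:eqn:exponential_decay}.

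One minor remark on bookkeeping: your identification of $U^{p-1}$ as the ``critical factor'' is slightly off. In each of your three reduced integrals the factor $U^{p-1}$ only ever appears in the products $U^{p-1}\partial_l U$ or $U^p$, which decay at rate $p\eta \geq \eta$; so after your integration by parts the slowest individual factor has rate $\eta$, not $(p-1)\eta$. This only strengthens your bound. The constant $\sigma = \min\parens{1, p-1}$ in the statement then plays no essential role beyond providing slack to absorb the polynomial prefactors when $p < 2$; when $p \geq 2$ you genuinely need the concession you describe---working with a slightly smaller $\eta$---but since the paper's $\eta$ is only \emph{some} exponential rate rather than the sharp one, this is harmless for every downstream use of the lemma.
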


We proceed to our result of ``uniform inversion'' of
$L_{r, \theta, \varphi}$.

\begin{lem} \label{lem:bound_inverse_L}
There exist $\eps_0, \bar{C} > 0$ such that $L_{r, \theta, \varphi}$ is invertible and
\[
\norm{L_{r, \theta, \varphi}^{-1}}_{
	\End \parens{N_{r, \theta, \varphi}}
}
\leq
\bar{C}
\]
for every $\eps \in \ooi{0, \eps_0}$ and
$
\parens{r, \theta, \varphi}
\in
R_\eps \times \coi{0, 2 \pi} \times \cci{0, \pi}
$.
\end{lem}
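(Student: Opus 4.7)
The plan is to apply the Lax--Milgram corollary \ref{lem:LaxMilgram} to the self-adjoint operator $L_{r, \theta, \varphi}$ by exhibiting a finite-dimensional subspace $A_{r, \theta, \varphi} \subset N_{r, \theta, \varphi}$ on which the quadratic form $\angles{L_{r, \theta, \varphi} \cdot \mid \cdot}_{H^1}$ is uniformly negative definite, and on whose orthogonal complement inside $N_{r, \theta, \varphi}$ the form is uniformly positive definite. The natural choice is
\[
A_{r, \theta, \varphi}
:=
\Pi_{r, \theta, \varphi} \parens*{
	\vspan \set{U_{r, \theta, \varphi_j} : 1 \leq j \leq K}
}
\subset
N_{r, \theta, \varphi},
\]
and Lemma \ref{auxiliary:lem:estimates_from_exponential} combined with \eqref{intro:eqn:exponential_decay} imply that (after suitable normalization) the family $\set{\Pi_{r, \theta, \varphi} U_{r, \theta, \varphi_j}}_{j}$ is almost $H^1$-orthonormal, so $\dim A_{r, \theta, \varphi} = K$ uniformly for small $\eps$.

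Negative definiteness on $A_{r, \theta, \varphi}$ follows from Lemma \ref{auxiliary:lem:ambrosetti_malchiodi}(1): for $n = \sum_j c_j \Pi_{r, \theta, \varphi} U_{r, \theta, \varphi_j}$, the expansion of $I_\eps'' \parens{W_{r, \theta, \varphi}} \brackets{n, n}$ has dominant diagonal terms $\sum_j \abs{c_j}^2 I_0'' \parens{U_{r, \theta, \varphi_j}} \brackets{U_{r, \theta, \varphi_j}, U_{r, \theta, \varphi_j}} = \parens{1 - p} \sum_j \abs{c_j}^2 \norm{U}_{H^1}^2$, which are uniformly negative. The cross terms between distinct peaks are exponentially small in $r$ by Lemma \ref{auxiliary:lem:estimates_from_exponential}; the potential correction $\int \parens{V_\eps - 1} n^2$ is absorbed by the admissibility bound $\max_{\abs{x} = 1} V \parens{\eps r x} < 1 + \eps^{3 \alpha / \parens{\alpha + 1}}$ built into $R_\eps$; and the Bopp--Podolsky contribution is of order $\eps^3$ by the multilinear bound from Section \ref{variational}. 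For positive definiteness on $A_{r, \theta, \varphi}^\perp \cap N_{r, \theta, \varphi}$, I would follow the localization scheme of \cite[Lemma 3.3]{ianniConcentrationPositiveBound2008} and \cite[Section 3]{ruizClusterSolutionsSchrodingerPoissonSlater2011}: introduce a smooth partition of unity $\set{\chi_0, \chi_1, \ldots, \chi_K}$ with $\chi_j$ equal to $1$ on a ball around $P \parens{r, \theta, \varphi_j}$ of radius of order $r \cdot \min_{k \neq j} \abs{d_{j, k}}$, decompose $n = \sum_{j = 0}^K \chi_j n$, and reduce the quadratic form to $\sum_j I_0'' \parens{U_{r, \theta, \varphi_j}} \brackets{\chi_j n, \chi_j n}$, to which Lemma \ref{auxiliary:lem:ambrosetti_malchiodi}(3) should apply. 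The interactions across distinct $\chi_j n$ and the corrections from $V_\eps - 1$ and the Bopp--Podolsky terms are again controlled by Lemma \ref{auxiliary:lem:estimates_from_exponential} and the admissibility conditions.

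The main obstacle will be to verify the orthogonality hypotheses of Lemma \ref{auxiliary:lem:ambrosetti_malchiodi}(3), which require $\chi_j n$ to be approximately $H^1$-orthogonal both to $U_{r, \theta, \varphi_j}$ (inherited from $n \in A_{r, \theta, \varphi}^\perp$ up to cutoff errors) and to $\ker I_0'' \parens{U_{r, \theta, \varphi_j}} = \vspan \set{\partial_i U_{r, \theta, \varphi_j} : i = 1, 2, 3}$. The latter is delicate: membership in $N_{r, \theta, \varphi}$ only provides orthogonality to the $3$-dimensional $T_{r, \theta, \varphi}$, whereas $\bigoplus_j \ker I_0'' \parens{U_{r, \theta, \varphi_j}}$ is $3K$-dimensional, leaving $3K - 3$ \emph{relative translation} directions inside $\vspan \set{\partial_i U_{r, \theta, \varphi_j}}$ in which the components of $n$ are a priori unconstrained. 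Controlling them will require explicit subtraction and careful estimation of the resulting correction to $I_\eps'' \parens{W_{r, \theta, \varphi}} \brackets{n, n}$, leveraging the positive contribution of $\int \parens{V_\eps - 1} \parens{\partial_i U_{r, \theta, \varphi_j}}^2$ together with the exponential cross-term bounds of Lemma \ref{auxiliary:lem:estimates_from_exponential}; the constraint $\alpha > 3 + \sqrt{7}$, translated into the exponent choices defining $R_\eps$, is precisely what should ensure that these errors remain below the positive lower bound produced by Lemma \ref{auxiliary:lem:ambrosetti_malchiodi}(3).
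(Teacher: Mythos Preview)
Your plan to invoke Lemma~\ref{lem:LaxMilgram} matches the paper's, but the execution differs: the paper takes the one-dimensional subspace $A_{r,\theta,\varphi}=\vspan\set{\Pi_{r,\theta,\varphi}\brackets{W_{r,\theta,\varphi}}}$, observes that $\abs{\parens{I_\eps''-I_0''}\parens{W_{r,\theta,\varphi}}\brackets{u,u}}\lesssim\eps^3\norm{u}_{H^1}^2$, and then defers the entire coercivity estimate on $A_{r,\theta,\varphi}^\perp$ to \cite[p.~262--4]{ruizClusterSolutionsSchrodingerPoissonSlater2011}. Your $K$-dimensional choice of $A_{r,\theta,\varphi}$ is at least more honest about the negative directions, and you are right to flag the $3K-3$ relative-translation directions in $N_{r,\theta,\varphi}$ as the crux of the matter.

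The gap is in your proposed resolution of that crux. The contribution $\int\parens{V_\eps-1}\parens{\partial_i U_{r,\theta,\varphi_j}}^2$ is non-negative, but by the definition of $R_\eps$ it is $O\parens{\eps^{3\alpha/\parens{\alpha+1}}}$, and the remaining corrections to $I_\eps''\parens{W_{r,\theta,\varphi}}$ on a relative-translation direction are either $O\parens{\eps^3}$ (the Bopp--Podolsky term) or exponentially small in $r$ (peak interactions, via Lemma~\ref{auxiliary:lem:estimates_from_exponential}). None of these furnishes a lower bound \emph{independent of $\eps$}, which is exactly what Lemma~\ref{lem:LaxMilgram} needs to produce a uniform $\bar{C}$; with your mechanism you would only obtain $\norm{L_{r,\theta,\varphi}^{-1}}\lesssim\eps^{-3\alpha/\parens{\alpha+1}}\to\infty$, which destroys the contraction in Lemma~\ref{auxiliary:lem:Banach}. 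The hypothesis $\alpha>3+\sqrt{7}$ does not help here either: as Remark~\ref{pseudo:rmk} spells out, that constraint is used solely in Lemma~\ref{reduced:lem:expansion_of_Ieps} and in Case~\ref{Case2} of Lemma~\ref{reduced_functional:lem:existence_of_minimum}, not in the inversion of $L_{r,\theta,\varphi}$. Finally, bear in mind that in \cite{ruizClusterSolutionsSchrodingerPoissonSlater2011} the configuration manifold is the full $\Lambda_\eps\subset\parens{\real^3}^K$, so the analogue of $T_{r,\theta,\varphi}$ there is $3K$-dimensional and the relative-translation directions are already removed from $N$; transplanting that argument to the present three-dimensional $T_{r,\theta,\varphi}$ requires genuinely new input, and the mechanism you sketch does not supply it.
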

\begin{proof}
It suffices to prove that we can apply Lemma \ref{lem:LaxMilgram} in the case $H_{r, \theta, \varphi} := N_{r, \theta, \varphi}$ and
$
A_{r, \theta, \varphi} := \vspan \set{
	\Pi_{r, \theta, \varphi} \brackets{W_{r, \theta, \varphi}}
}
$.

We claim that there exists $\eps_0 > 0$ such that
\[
\angles{L_{r, \theta, \varphi} \brackets{u} \mid u}_{H^1}
\gtrsim
\norm{u}_{H^1}^2
\]
for every $\eps \in \ooi{0, \eps_0}$,
$
\parens{r, \theta, \varphi}
\in
R_\eps \times \coi{0, 2 \pi} \times \cci{0, \pi}
$
and
$u \in A_{r, \theta, \varphi}^\perp$. Indeed, it is easy to check that
\[
\abs*{
	\parens*{
		I_\eps'' \parens{W_{r, \theta, \varphi}}
		-
		I_0'' \parens{W_{r, \theta, \varphi}}
	}
	\brackets{u, u}
}
\lesssim
\eps^3 \norm{u}_{H^1}^2
\]
for every
$\eps > 0$,
$
\parens{r, \theta, \varphi}
\in
\ooi{0, \infty} \times \coi{0, 2 \pi} \times \cci{0, \pi}
$
and $u \in H^1$.
At this point, it suffices to argue precisely as in \cite[p. 262--4]{ruizClusterSolutionsSchrodingerPoissonSlater2011} to conclude.

Now, we claim that there exists $\eps_0 > 0$ such that
\[
- \angles{L_{r, \theta, \varphi} \brackets{u} \mid u}_{H^1}
\lesssim
\norm{u}_{H^1}^2
\]
for every $\eps \in \ooi{0, \eps_0}$,
$
\parens{r, \theta, \varphi}
\in
R_\eps \times \coi{0, 2 \pi} \times \cci{0, \pi}
$
and $u \in A_{r, \theta, \varphi}$. On one hand, Lemma \ref{auxiliary:lem:estimates_from_exponential} shows that
\begin{multline*}
\left|
	I_\eps'' \parens{W_{r, \theta, \varphi}} \brackets*{
		\Pi_{\eps, \beta} \brackets{W_{r, \theta, \varphi}},
		\Pi_{\eps, \beta} \brackets{W_{r, \theta, \varphi}}
	}
	-
\right.
\\
\left.
	-
	I_\eps'' \parens{W_{r, \theta, \varphi}} \brackets{W_{r, \theta, \varphi}, W_{r, \theta, \varphi}}
\right|
\lesssim
e^{- \gamma \sigma \eta r}
\end{multline*}
for every $\eps > 0$ and
$
\parens{r, \theta, \varphi}
\in
R_\eps \times \coi{0, 2 \pi} \times \cci{0, \pi}
$.
On the other hand, it follows from Lemmas \ref{auxiliary:lem:ambrosetti_malchiodi} and \ref{auxiliary:lem:estimates_from_exponential} that, up to shrinking $\eps_0$,
\[
I_\eps'' \parens{W_{r, \theta, \varphi}}
\brackets{W_{r, \theta, \varphi}, W_{r, \theta, \varphi}}
\leq
\frac{K \parens{1 - p}}{2} \norm{U}_{H^1}^2
<
0
\]
for every
$\eps \in \ooi{0, \eps_0}$,
$
\parens{r, \theta, \varphi}
\in
R_\eps \times \coi{0, 2 \pi} \times \cci{0, \pi}
$.
\end{proof}

The next result estimates how $I_\eps''$ varies in a neighborhood of $W_{r, \theta, \varphi}$.

\begin{lem}
\label{auxiliary:lem:estimate_second_derivative}
Given $\eps_0 > 0$, we have
\begin{multline*}
\norm*{
	I_\eps'' \parens{W_{r, \theta, \varphi} + w} \brackets{u, \cdot}
	-
	I_\eps'' \parens{W_{r, \theta, \varphi}} \brackets{u, \cdot}
}_{H^{-1}}
\lesssim
\\
\lesssim
\parens*{
	\norm{w}_{H^1} + \norm{w}_{H^1}^2 + \norm{w}^{p-1}_{H^1}
}
\norm{u}_{H^1}
\end{multline*}
for every
$\eps \in \ooi{0, \eps_0}$;
$
\parens{r, \theta, \varphi}
\in
\ooi{0, \infty} \times \coi{0, 2 \pi} \times \cci{0, \pi}
$
and $u, w \in H^1$.
\end{lem}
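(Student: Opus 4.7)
The plan is to expand the difference $I_\eps'' \parens{W + w} \brackets{u, \cdot} - I_\eps'' \parens{W} \brackets{u, \cdot}$ (writing $W = W_{r, \theta, \varphi}$ for brevity) directly from the explicit formula for $I_\eps''$ recalled in Section \ref{variational}. The $\angles{\cdot, \cdot}_{H^1_\eps}$-term is independent of the base point and cancels, so the difference decomposes into a nonlocal part coming from the $\phi_\eps$-convolution terms and a local part coming from the $p \abs{u}^{p - 1}$-term. To pass to the $H^{-1}$-norm I pair with an arbitrary test function $v \in H^1$ and take the supremum over $\norm{v}_{H^1} \leq 1$.

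For the nonlocal part, I expand $\parens{W + w}^2 = W^2 + 2 W w + w^2$ and
\[
\phi_{\eps, \parens{W + w} u} \parens{W + w} - \phi_{\eps, W u} W
=
\phi_{\eps, w u} W + \phi_{\eps, w u} w + \phi_{\eps, W u} w
\]
by multilinearity of $\parens{u_1, u_2, u_3} \mapsto \phi_{\eps, u_1 u_2} u_3$. Testing the resulting five terms against $v$ produces quadrilinear expressions of the form $\int \phi_{\eps, a_1 a_2} a_3 a_4$ in which $w$ appears once or twice and the remaining factors lie in $\set{W, u, v}$. The quadrilinear bound $\abs*{\int \phi_{\eps, u_1 u_2} u_3 u_4} \lesssim \prod_i \norm{u_i}_{L^2}$ recalled in Section \ref{variational}, together with $H^1 \hookrightarrow L^2$ and $\norm{W}_{L^2} \leq K \norm{U}_{L^2} \lesssim 1$, controls the nonlocal part by $\eps^3 \parens*{\norm{w}_{H^1} + \norm{w}_{H^1}^2} \norm{u}_{H^1} \norm{v}_{H^1}$, which is absorbed into the right-hand side of the lemma uniformly for $\eps \in \ooi{0, \eps_0}$.

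For the local part, I invoke the elementary pointwise inequality
\[
\abs*{\abs{a + b}^{p - 1} - \abs{a}^{p - 1}}
\lesssim
\begin{cases}
\abs{b}^{p - 1} & \text{if } 1 < p \leq 2;\\
\abs{a}^{p - 2} \abs{b} + \abs{b}^{p - 1} & \text{if } p \geq 2.
\end{cases}
\]
When $p \geq 2$, the term $\int W^{p - 2} \abs{w} \abs{u} \abs{v}$ is bounded using $\norm{W}_{L^\infty} \leq K \norm{U}_{L^\infty} \lesssim 1$ and the Sobolev embedding $H^1 \hookrightarrow L^4$, yielding $\lesssim \norm{w}_{H^1} \norm{u}_{H^1} \norm{v}_{H^1}$. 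In either regime, the term $\int \abs{w}^{p - 1} \abs{u} \abs{v}$ is controlled by H\"older's inequality with exponents $\parens{p + 1} / \parens{p - 1}, p + 1, p + 1$ (whose reciprocals sum to $1$) to give $\norm{w}_{L^{p + 1}}^{p - 1} \norm{u}_{L^{p + 1}} \norm{v}_{L^{p + 1}} \lesssim \norm{w}_{H^1}^{p - 1} \norm{u}_{H^1} \norm{v}_{H^1}$, thanks to $H^1 \hookrightarrow L^{p + 1}$ (available because $p + 1 \in \ooi{2, 6}$ given $1 < p < 5$).

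Adding the two contributions and taking the supremum over $v$ with $\norm{v}_{H^1} \leq 1$ yields the stated bound. The only mild obstacle is the case split in the pointwise inequality for $\abs{a + b}^{p - 1} - \abs{a}^{p - 1}$, required because $p$ ranges over $\ooi{1, 5}$ and thus covers both the sub- and super-quadratic regimes of $t \mapsto t^{p - 1}$; everything else is a direct application of tools already collected in Section \ref{variational}.
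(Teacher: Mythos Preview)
Your proof is correct and follows essentially the same approach as the paper: both expand the difference from the explicit formula for $I_\eps''$, cancel the $H^1_\eps$-term, estimate the five nonlocal terms via the quadrilinear bound $\abs*{\int \phi_{\eps, u_1 u_2} u_3 u_4} \lesssim \prod_i \norm{u_i}_{L^2}$, and handle the local $\abs{\cdot}^{p-1}$-difference by the elementary pointwise inequality. You make the case split $p \lessgtr 2$ and the H\"older exponents for the local term explicit, whereas the paper simply records the final bound $\parens*{\norm{w}_{H^1} + \norm{w}_{H^1}^{p-1}} \norm{u_1}_{H^1} \norm{u_2}_{H^1}$ without spelling these out, but the underlying argument is the same.
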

\begin{proof}
Clearly,
\begin{multline*}
\parens*{I_\eps'' \parens{W_{r, \theta, \varphi} + w} - I_\eps'' \parens{W_{r, \theta, \varphi}}}
\brackets{u_1, u_2}
=
\underbrace{
	\eps^3
	\int \parens{\phi_{\eps, w^2} u_1 u_2 + \phi_{\eps, w u_1} w u_2}
}_{\parens{\ast}}
+
\\
+
\underbrace{
	\eps^3
	\int \parens{
		\phi_{\eps, W_{r, \theta, \varphi} u_1} w u_2
		+
		\phi_{\eps, w u_1} W_{r, \theta, \varphi} u_2
		+
		2 \phi_{\eps, W_{r, \theta, \varphi} w} u_1 u_2
	}
}_{\parens{\ast \ast}}
-
\\
-
\underbrace{
	p \int \parens*{
		\parens*{\abs{W_{r, \theta, \varphi} + w}^{p - 1} - W_{r, \theta, \varphi}^{p - 1}} u_1 u_2
	}
}_{\parens{\ast \ast \ast}}.
\end{multline*}
We can estimate the terms on the right-hand side as follows:
\[
\abs{\parens{\ast}}
\leq
\eps^3 \norm{w}_{H^1}^2 \norm{u_1}_{H^1} \norm{u_2}_{H^1};
\quad \quad
\parens{\ast \ast}
\leq
\eps^3 \norm{w}_{H^1} \norm{u_1}_{H^1} \norm{u_2}_{H^1}
\]
and
\[
\parens{\ast \ast \ast}
\lesssim
\parens*{\norm{w}_{H^1} + \norm{w}_{H^1}^{p - 1}}
\norm{u_1}_{H^1} \norm{u_2}_{H^1}
\]
for every $\eps, r > 0$ and $w, u_1, u_2 \in H^1$.
\end{proof}

The following lemma is the last preliminary result.

\begin{lem}
\label{auxiliary:lem:Banach}
We can fix $\eps_0, \bar{C} > 0$ such that if $0 < \eps < \eps_0$ and 
$
\parens{r, \theta, \varphi}
\in
R_\eps \times \coi{0, 2 \pi} \times \cci{0, \pi}
$,
then the problem
\begin{equation}
\label{auxiliary:eqn:Banach}
\Pi_{r, \theta, \varphi} \brackets{
	\nabla I_\eps \parens{W_{r, \theta, \varphi} + n}
} = 0;
\quad \quad
n \in \mathcal{N}_{\eps, r, \theta, \varphi}^{\bar{C}}
\end{equation}
has a unique solution, where
\[
\mathcal{N}_{\eps, r, \theta, \varphi}^{\bar{C}}
:=
\set*{
	n \in N_{r, \theta, \varphi}:
	\norm{n}_{H^1}
	\leq
	2 \bar{C} \norm{\nabla I_\eps \parens{W_{r, \theta, \varphi}}}_{H^1}
}.
\]
\end{lem}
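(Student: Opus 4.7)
The plan is to recast \eqref{auxiliary:eqn:Banach} as a fixed-point equation on $\mathcal{N}_{\eps, r, \theta, \varphi}^{\bar{C}}$ and apply the Banach contraction principle. Since by Lemma \ref{lem:bound_inverse_L} the operator $L_{r, \theta, \varphi}$ is invertible with $\|L_{r, \theta, \varphi}^{-1}\|_{\End(N_{r,\theta,\varphi})} \leq \bar C$ uniformly in the admissible parameters, writing $\Pi := \Pi_{r,\theta,\varphi}$ and $W := W_{r,\theta,\varphi}$, equation \eqref{auxiliary:eqn:Banach} for $n \in N_{r,\theta,\varphi}$ is equivalent to
\[
L_{r,\theta,\varphi}[n] = -\Pi\brackets{\nabla I_\eps(W)} - \Pi \circ \R\brackets{I_\eps'(W+n) - I_\eps'(W) - I_\eps''(W)[n,\cdot]},
\]
i.e., to the fixed-point equation $n = T_{\eps, r, \theta, \varphi}(n)$, where
\[
T_{\eps, r, \theta, \varphi}(n) := -L_{r, \theta, \varphi}^{-1}\Pi\brackets{\nabla I_\eps(W)} - L_{r,\theta,\varphi}^{-1} \Pi \circ \R\brackets{G_\eps(W, n)}
\]
and $G_\eps(W, n) := I_\eps'(W+n) - I_\eps'(W) - I_\eps''(W)[n, \cdot]$.

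The first step is to control the nonlinear remainder. Writing
\[
G_\eps(W, n) = \int_0^1 \parens{I_\eps''(W + t n) - I_\eps''(W)}\brackets{n, \cdot} \dif t
\]
and applying Lemma \ref{auxiliary:lem:estimate_second_derivative} with $w = t n$, we obtain
\[
\norm{G_\eps(W, n)}_{H^{-1}}
\lesssim
\parens*{\norm{n}_{H^1}^2 + \norm{n}_{H^1}^3 + \norm{n}_{H^1}^p}
\lesssim
\norm{n}_{H^1}^{1 + \sigma},
\]
for $\|n\|_{H^1}$ bounded, where $\sigma := \min(1, p - 1) > 0$. Combining with Lemma \ref{pseudo:lem:estimate} and the uniform bound on $L_{r,\theta,\varphi}^{-1}$, there exists a constant $C_0 > 0$ such that
\[
\norm{T_{\eps,r,\theta,\varphi}(n)}_{H^1}
\leq
\bar C \norm{\nabla I_\eps(W)}_{H^1}
+
C_0 \norm{n}_{H^1}^{1+\sigma}
\]
for every $n \in \mathcal{N}_{\eps, r, \theta, \varphi}^{\bar{C}}$ and every $(\eps, r, \theta, \varphi)$ in the admissible range.

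The second step is the self-map property: for $n \in \mathcal{N}_{\eps, r, \theta, \varphi}^{\bar{C}}$ we have $\|n\|_{H^1}^{1+\sigma} \leq (2\bar C)^{1+\sigma} \|\nabla I_\eps(W)\|_{H^1}^{1+\sigma}$, so by Lemma \ref{pseudo:lem:estimate},
\[
\norm{T_{\eps,r,\theta,\varphi}(n)}_{H^1}
\leq
\parens*{\bar C + C_0 (2 \bar C)^{1+\sigma} \norm{\nabla I_\eps(W)}_{H^1}^{\sigma}}
\norm{\nabla I_\eps(W)}_{H^1},
\]
and the bracket is $\leq 2\bar C$ provided $\eps_0$ is small enough (so that $\norm{\nabla I_\eps(W)}_{H^1}^\sigma \lesssim \eps^{2\sigma}$ is negligible). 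Since $T_{\eps, r, \theta, \varphi}$ takes values in $N_{r,\theta,\varphi}$ by construction, $T_{\eps,r,\theta,\varphi}$ maps $\mathcal{N}_{\eps, r, \theta, \varphi}^{\bar{C}}$ into itself.

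The third step is contractivity: an entirely analogous computation, again based on Lemma \ref{auxiliary:lem:estimate_second_derivative} (applied to the difference $G_\eps(W, n_1) - G_\eps(W, n_2)$), yields
\[
\norm{T_{\eps,r,\theta,\varphi}(n_1) - T_{\eps,r,\theta,\varphi}(n_2)}_{H^1}
\lesssim
\parens*{\norm{n_1}_{H^1}^\sigma + \norm{n_2}_{H^1}^\sigma} \norm{n_1 - n_2}_{H^1},
\]
and the prefactor is strictly less than $1$ on $\mathcal{N}_{\eps, r, \theta, \varphi}^{\bar{C}}$ once $\eps_0$ is chosen small enough. The Banach fixed-point theorem then gives existence and uniqueness of the solution, and the main (mild) obstacle is simply the bookkeeping of the competing powers $\|n\|_{H^1}^2, \|n\|_{H^1}^3, \|n\|_{H^1}^p$ from Lemma \ref{auxiliary:lem:estimate_second_derivative}, particularly when $p < 2$ so that the weakest exponent is $p-1 < 1$; the definition $\sigma := \min(1, p-1)$ absorbs both cases uniformly.
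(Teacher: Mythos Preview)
Your proof is correct and follows essentially the same route as the paper. The paper defines the map $S_{\eps,r,\theta,\varphi}(n) = n - L_{r,\theta,\varphi}^{-1}\circ\Pi_{r,\theta,\varphi}\brackets{\nabla I_\eps(W_{r,\theta,\varphi}+n)}$, which after expanding $\nabla I_\eps(W+n)$ is exactly your $T_{\eps,r,\theta,\varphi}$; the only cosmetic difference is that the paper establishes contractivity by bounding $\norm{S'_{\eps,r,\theta,\varphi}(n)}_{\End(N_{r,\theta,\varphi})}\le 1/2$ directly (again via Lemma~\ref{auxiliary:lem:estimate_second_derivative}), whereas you obtain the Lipschitz estimate through the integral representation of $G_\eps(W,n_1)-G_\eps(W,n_2)$.
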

\begin{proof}
Let $\eps_0, \bar{C} > 0$ be furnished by Lemma \ref{lem:bound_inverse_L} and let
$
S_{\eps, r, \theta, \varphi}
\colon
N_{r, \theta, \varphi} \to N_{r, \theta, \varphi}
$
be given by
\[
S_{\eps, r, \theta, \varphi} \parens{n}
=
n
-
L_{r, \theta, \varphi}^{-1} \circ \Pi_{r, \theta, \varphi}
\brackets*{\nabla I_\eps \parens{W_{r, \theta, \varphi} + n}}
\]
for any $\eps \in \ooi{0, \eps_0}$ and
$
\parens{r, \theta, \varphi}
\in
R_\eps \times \coi{0, 2 \pi} \times \cci{0, \pi}
$.
To finish from an application of the Banach Fixed Point Theorem, we only have to prove that there exists $\eps_1 > 0$ such that given
$0 < \eps < \eps_1$ and
$
\parens{r, \theta, \varphi}
\in
R_\eps \times \coi{0, 2 \pi} \times \cci{0, \pi}
$,
$S_{\eps, r, \theta, \varphi}$ has a unique fixed point in
$\mathcal{N}_{\eps, r, \theta, \varphi}^{\bar{C}}$.

\begin{claim}
There exists $\eps_1 > 0$ such that
\[
\norm*{S_{\eps, r, \theta, \varphi}' \parens{n}}_{
	\End \parens{N_{r, \theta, \varphi}}
} 
\leq
\frac{1}{2}
\]
for every $\eps \in \ooi{0, \eps_1}$,
$
\parens{r, \theta, \varphi}
\in
R_\eps \times \coi{0, 2 \pi} \times \cci{0, \pi}
$
and $n \in \mathcal{N}_{\eps, r, \theta, \varphi}^{\bar{C}}$.
\end{claim}
\begin{proof}
Indeed,
\begin{align*}
S_{\eps, r, \theta, \varphi}' \parens{w_1} \brackets{w_2}
&= w_2 - L_{r, \theta, \varphi}^{-1} \circ \R \circ I_\eps'' \parens{W_{r, \theta, \varphi} + w_1}
\brackets{w_2, \cdot};
\\
&=
L_{r, \theta, \varphi}^{-1}
\brackets*{
	L_{r, \theta, \varphi} \brackets{w_2}
	-
	\R \circ I_\eps'' \parens{W_{r, \theta, \varphi} + w_1} \brackets{w_2, \cdot}
};
\\
&=
L_{r, \theta, \varphi}^{-1} \circ \R \brackets*{
	I_\eps'' \parens{W_{r, \theta, \varphi}} \brackets{w_2, \cdot}
	-
	I_\eps'' \parens{W_{r, \theta, \varphi} + w_1} \brackets{w_2, \cdot}
}.
\end{align*}
Due to Lemma \ref{auxiliary:lem:estimate_second_derivative},
\begin{multline*}
\abs*{S_{\eps, r, \theta, \varphi}' \parens{w_1} \brackets{w_2}}
\leq
\\
\leq
\abs*{
	L_{r, \theta, \varphi}^{-1} \circ \R
	\brackets*{
		I_\eps'' \parens{W_{r, \theta, \varphi}} \brackets{w_2, \cdot}
		-
		I_\eps'' \parens{W_{r, \theta, \varphi} + w_1} \brackets{w_2, \cdot}
	}
}
\lesssim
\\
\lesssim
\parens*{
	\norm{w_1}_{H^1} + \norm{w_1}_{H^1}^2 + \norm{w_1}_{H^1}^{p - 1}
}
\norm{w_2}_{H^1}
\end{multline*}
for every $\eps \in \ooi{0, \eps_0}$; $r \in R_\eps$
and $w_1, w_2 \in N_{r, \theta, \varphi}$, so the result follows from Lemma \ref{pseudo:lem:estimate}.
\end{proof}

At this point, we only have to prove that
$
S_{\eps, r, \theta, \varphi}
\parens{\mathcal{N}_{\eps, r, \theta, \varphi}^{\bar{C}}}
\subset
\mathcal{N}_{\eps, r, \theta, \varphi}^{\bar{C}}
$
for every $\eps \in \ooi{0, \eps_2}$ and
$
\parens{r, \theta, \varphi}
\in
R_\eps \times \coi{0, 2 \pi} \times \cci{0, \pi}
$,
where
$\eps_2 := \min \parens{\eps_0, \eps_1} > 0$. Due to Lemma \ref{lem:bound_inverse_L},
\[
\norm*{S_{\eps, r, \theta, \varphi} \parens{0}}_{H^1}
=
\norm*{
	L_{r, \theta, \varphi}^{-1} \brackets{\nabla I_\eps \parens{W_{r, \theta, \varphi}}}
}_{H^1}
\leq
\bar{C} \norm{\nabla I_\eps \parens{W_{r, \theta, \varphi}}}_{H^1}.
\]
In view of the previous claim,
\[
\norm{S_{\eps, r, \theta, \varphi} \parens{0} - S_{\eps, r, \theta, \varphi} \parens{w}}_{H^1}
\leq
\frac{1}{2}\norm{w}_{H^1}
\leq
\bar{C} \norm{\nabla I_\eps \parens{W_{r, \theta, \varphi}}}_{H^1}.
\]
We conclude that
$
\norm{S_{\eps, r, \theta, \varphi} \parens{w}}_{H^1}
\leq
2 \bar{C} \norm{\nabla I_\eps \parens{W_{r, \theta, \varphi}}}_{H^1}
$,
hence the result.
\end{proof}

We can finally prove Lemma \ref{solving:lem:solution_auxiliary}.

\begin{proof}[Proof of Lemma \ref{solving:lem:solution_auxiliary}]
Fix $\eps_0 > 0$ for which the conclusions of Lemmas \ref{lem:bound_inverse_L} and \ref{auxiliary:lem:Banach} hold and let $\bar{C} > 0 $ be furnished by Lemma \ref{auxiliary:lem:Banach}. Let
$
\mathcal{H}
\colon
\mathcal{O} \to H^1
$
be the mapping of class $C^1$ given by
\[
\mathcal{H} \parens{\eps, r, \theta, \varphi, n}
=
\nabla I_\eps \parens{W_{r, \theta, \varphi} + n},
\]
where
\begin{multline*}
\mathcal{O}
:=
\left\{
	\parens{\eps, r, \theta, \varphi, n}
	\in
	\ooi{0, \eps_0}
	\times
	\ooi{0, \infty} \times \coi{0, 2 \pi} \times \cci{0, \pi}
	\times
	H^1:
\right.
\\
\left.
	r \in R_\eps
	\quad \text{and} \quad
	n \in \mathcal{N}_{\eps, r, \theta, \varphi}^{\bar{C}}
\right\}.
\end{multline*}

The next claim follows directly from Lemmas \ref{lem:bound_inverse_L} and \ref{auxiliary:lem:estimate_second_derivative}.

\begin{claim} \label{claim:x}
Up to shrinking $\eps_0$,
\[
N_{r, \theta, \varphi} \ni n_2
\mapsto
\mathcal{H}'_{\eps, r, \theta, \varphi} \parens{n_1} \brackets{n_2}
:=
\mathcal{H}' \parens{\eps, r, \theta, \varphi, n_1}
\brackets{0, 0, 0, 0, n_2}
\in
H^1
\]
is invertible and
\[
\norm*{
	\mathcal{H}'_{\eps, r, \theta, \varphi} \parens{n} \brackets{u}
}_{H^1}
\gtrsim
\norm{u}_{H^1}
\]
for every $\parens{\eps, r, \theta, \varphi, n} \in \mathcal{O}$
and $u \in H^1$.
\end{claim}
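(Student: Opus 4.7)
The plan is to treat $\mathcal{H}'_{\eps, r, \theta, \varphi} \parens{n}$ as a small perturbation of $L_{r, \theta, \varphi}$ and to transfer the uniform invertibility of the latter furnished by Lemma \ref{lem:bound_inverse_L}. Starting from
\[
\mathcal{H}'_{\eps, r, \theta, \varphi} \parens{n} \brackets{u}
=
\R \brackets{I_\eps'' \parens{W_{r, \theta, \varphi} + n} \brackets{u, \cdot}},
\]
composing with the orthogonal projection $\Pi_{r, \theta, \varphi}$ yields the decomposition
$
\Pi_{r, \theta, \varphi} \circ \mathcal{H}'_{\eps, r, \theta, \varphi} \parens{n}
=
L_{r, \theta, \varphi} + E_{\eps, r, \theta, \varphi, n}
$
on $N_{r, \theta, \varphi}$, where the error $E_{\eps, r, \theta, \varphi, n}$ is the projection of the Riesz representative of $I_\eps'' \parens{W_{r, \theta, \varphi} + n} - I_\eps'' \parens{W_{r, \theta, \varphi}}$ acting on its first slot.

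The next step is to control $E$. The definition of $\mathcal{N}_{\eps, r, \theta, \varphi}^{\bar{C}}$ together with Lemma \ref{pseudo:lem:estimate} implies $\norm{n}_{H^1} \lesssim \eps^2$ uniformly for $n \in \mathcal{N}_{\eps, r, \theta, \varphi}^{\bar{C}}$, so Lemma \ref{auxiliary:lem:estimate_second_derivative} delivers
\[
\norm*{E_{\eps, r, \theta, \varphi, n} \brackets{u}}_{H^1}
\lesssim
\parens*{\eps^2 + \eps^4 + \eps^{2 \parens{p - 1}}} \norm{u}_{H^1}
\]
with every exponent strictly positive because $p > 1$. Pairing this with the uniform lower bound $\norm{L_{r, \theta, \varphi} \brackets{u}}_{H^1} \geq \bar{C}^{-1} \norm{u}_{H^1}$ coming from Lemma \ref{lem:bound_inverse_L} and shrinking $\eps_0$ so that the error contributes at most $\tfrac{1}{2 \bar{C}} \norm{u}_{H^1}$ produces
\[
\norm*{\Pi_{r, \theta, \varphi} \mathcal{H}'_{\eps, r, \theta, \varphi} \parens{n} \brackets{u}}_{H^1}
\geq
\tfrac{1}{2 \bar{C}} \norm{u}_{H^1}.
\]
Since $\Pi_{r, \theta, \varphi}$ is an $H^1$-orthogonal projection, $\norm{\mathcal{H}'_{\eps, r, \theta, \varphi} \parens{n} \brackets{u}}_{H^1}$ dominates the projected norm, yielding the announced lower bound. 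Injectivity with closed range---and hence invertibility onto the image---follows at once.

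I do not anticipate any substantive obstacle: the claim is the usual ``uniform invertibility survives a small perturbation'' principle, and all quantitative inputs are already packaged in Lemmas \ref{lem:bound_inverse_L}, \ref{auxiliary:lem:estimate_second_derivative}, and \ref{pseudo:lem:estimate}. The only minor point worth checking is that $\min \parens{2, 2 \parens{p - 1}} > 0$, which is automatic from the standing assumption $p > 1$.
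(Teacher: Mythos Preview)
Your approach is exactly what the paper intends: it states only that the claim ``follows directly from Lemmas \ref{lem:bound_inverse_L} and \ref{auxiliary:lem:estimate_second_derivative}'', and you have correctly unpacked this by writing $\Pi_{r,\theta,\varphi}\circ\mathcal{H}'_{\eps,r,\theta,\varphi}(n)=L_{r,\theta,\varphi}+E$ and bounding the perturbation $E$ via Lemma \ref{auxiliary:lem:estimate_second_derivative} combined with the size estimate $\norm{n}_{H^1}\lesssim\eps^2$ from Lemma \ref{pseudo:lem:estimate}.

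One small point: you finish with ``invertibility onto the image'', whereas the claim (and the subsequent Implicit Function Theorem step) needs genuine bijectivity of $\Pi_{r,\theta,\varphi}\circ\mathcal{H}'_{\eps,r,\theta,\varphi}(n)\colon N_{r,\theta,\varphi}\to N_{r,\theta,\varphi}$. This is immediate from what you already have: writing $L_{r,\theta,\varphi}+E=L_{r,\theta,\varphi}\parens{\id+L_{r,\theta,\varphi}^{-1}E}$ and using $\norm{L_{r,\theta,\varphi}^{-1}E}_{\End(N_{r,\theta,\varphi})}\leq\bar{C}\cdot\tfrac{1}{2\bar{C}}<1$, a Neumann series gives surjectivity for free. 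With that one line added, your argument is complete and matches the paper's route.
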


In view of the previous claim, we can use the Implicit Function Theorem to deduce that there exists a mapping of class $C^1$,
\[
R_\eps \times \coi{0, 2 \pi} \times \cci{0, \pi}
\ni
\parens{r, \theta, \varphi}
\mapsto
n_{\eps, r, \theta, \varphi}
\in
H^1,
\]
such that
\[
\mathcal{H}
\parens{\eps, r, \theta, \varphi, n_{\eps, r, \theta, \varphi}}
=
0
\]
for every $\eps \in \ooi{0, \eps_0}$ and
$
\parens{r, \theta, \varphi}
\in
R_\eps \times \coi{0, 2 \pi} \times \cci{0, \pi}
$.
To finish, the estimate follows from Lemma \ref{pseudo:lem:estimate}.
\end{proof}

\subsection{The reduced functional}

In this section (based on \cite[Section 4]{ruizClusterSolutionsSchrodingerPoissonSlater2011}), we define the reduced functional $\Phi_\eps$, we show that its critical points are naturally associated with critical points of $I_\eps$ and we prove that $\Phi_\eps$ has a critical point.

Let us begin by defining the reduced functional.

\begin{defn}
Suppose that $\eps_0 > 0$ is furnished by Lemma \ref{solving:lem:solution_auxiliary}. If $0 < \eps < \eps_0$, then we define the \emph{reduced functional}
$
\Phi_\eps
\in
C^1 \parens{R_\eps \times \coi{0, 2 \pi} \times \cci{0, \pi}}
$
as given by
\[
\Phi_\eps \parens{r, \theta, \varphi}
=
I_\eps \parens{W_{r, \theta, \varphi} + n_{\eps, r, \theta, \varphi}}.
\]
\end{defn}

The next result shows that if $\eps$ is sufficiently small, then critical points of $\Phi_\eps$ are naturally associated with critical points of $I_\eps$.

\begin{lem} \label{lem:natural_constraint}
There exists $\eps_0 > 0$ such that if $0 < \eps < \eps_0$ and
$\nabla \Phi_\eps \parens{r, \theta, \varphi} = 0$, then
$
\nabla I_\eps \parens {W_{r, \theta, \varphi}
+
n_{\eps, r, \theta, \varphi}}
=
0
$.
\end{lem}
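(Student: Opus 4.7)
This is the standard \emph{natural constraint} argument of the Lyapunov--Schmidt method. Since $n_{\eps,r,\theta,\varphi}\in N_{r,\theta,\varphi}$ satisfies the auxiliary equation, the gradient $\nabla I_\eps(W_{r,\theta,\varphi}+n_{\eps,r,\theta,\varphi})$ already lies in the three-dimensional tangent space $T_{r,\theta,\varphi}$. I would expand it in the spanning basis as
\[
\nabla I_\eps\bigl(W_{r,\theta,\varphi}+n_{\eps,r,\theta,\varphi}\bigr)
=\sum_{\dir\in\set{\rad,\azi,\pol}} a_\dir\,\dot W^\dir_{r,\theta,\varphi},
\]
and reduce the claim to showing $a_\rad=a_\azi=a_\pol=0$. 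Differentiating $\Phi_\eps$ by the chain rule gives, for each parameter direction,
\[
0=\partial_\dir\Phi_\eps(r,\theta,\varphi)
=\angles*{
\nabla I_\eps(W_{r,\theta,\varphi}+n_{\eps,r,\theta,\varphi}),\,
\dot W^\dir_{r,\theta,\varphi}+\partial_\dir n_{\eps,r,\theta,\varphi}
}_{H^1},
\]
so substituting the expansion turns the hypothesis $\nabla\Phi_\eps=0$ into a homogeneous $3\times 3$ linear system $M(a_\rad,a_\azi,a_\pol)^T=0$ with matrix
\[
M_{\dir,\dir'}
=\angles*{\dot W^{\dir'}_{r,\theta,\varphi},\dot W^\dir_{r,\theta,\varphi}}_{H^1}
+\angles*{\dot W^{\dir'}_{r,\theta,\varphi},\partial_\dir n_{\eps,r,\theta,\varphi}}_{H^1}.
\]

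To control the second term in $M_{\dir,\dir'}$, I would differentiate the orthogonality $\angles{n_{\eps,r,\theta,\varphi},\dot W^{\dir'}_{r,\theta,\varphi}}_{H^1}=0$ (which holds throughout the admissible set because $n_{\eps,r,\theta,\varphi}\in N_{r,\theta,\varphi}$) in the parameters, yielding
\[
\angles*{\dot W^{\dir'}_{r,\theta,\varphi},\partial_\dir n_{\eps,r,\theta,\varphi}}_{H^1}
=-\angles*{n_{\eps,r,\theta,\varphi},\partial_\dir\dot W^{\dir'}_{r,\theta,\varphi}}_{H^1}.
\]
Combined with the bound $\norm{n_{\eps,r,\theta,\varphi}}_{H^1}\lesssim\eps^2$ from Lemma \ref{solving:lem:solution_auxiliary} and the $H^1$-bound on the mixed second derivatives $\partial_\dir\dot W^{\dir'}_{r,\theta,\varphi}$ (obtained directly from $W_{r,\theta,\varphi}(x)=\sum_j U(x-P(r,\theta,\varphi_j))$, translation invariance of the $H^1$ norm and \eqref{intro:eqn:exponential_decay}), this correction is negligible compared with the leading Gram matrix $G_{\dir,\dir'}=\angles{\dot W^{\dir'}_{r,\theta,\varphi},\dot W^\dir_{r,\theta,\varphi}}_{H^1}$ once one divides out the proper scaling factors ($1$ for radial and $r$ for azimuthal/polar entries, since $\partial_\theta P$ and $\partial_\varphi P$ grow linearly in $r$).

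It remains to verify that $G$ is uniformly invertible, after this anisotropic rescaling, as $(r,\theta,\varphi)$ ranges over $R_\eps\times\coi{0,2\pi}\times\cci{0,\pi}$. By Lemma \ref{auxiliary:lem:estimates_from_exponential} the cross-bump contributions $\angles{\dot U^\dir_{r,\theta,\varphi_j},\dot U^{\dir'}_{r,\theta,\varphi_k}}_{H^1}$ with $j\neq k$ are exponentially small in $r$, so $G$ coincides, up to exponentially small errors, with the same-bump sum $\sum_j\angles{\dot U^\dir_{r,\theta,\varphi_j},\dot U^{\dir'}_{r,\theta,\varphi_j}}_{H^1}$. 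Expressing each summand via $\dot U^\dir_{r,\theta,\varphi_j}=-\nabla U(\cdot-P(r,\theta,\varphi_j))\cdot\partial_\dir P(r,\theta,\varphi_j)$ and using $\int\partial_iU\,\partial_kU=\tfrac{1}{3}\delta_{ik}\norm{\nabla U}_{L^2}^2$, one obtains an explicit matrix in terms of $\norm{\nabla U}_{L^2}$ and the trigonometric factors associated with the regular $K$-gon vertices; the key point is that its determinant stays bounded away from $0$ uniformly on the admissible set for every $K\geq 2$. Once this uniform non-degeneracy is in hand, $M=G+O(\eps^2)\cdot(\text{scaling})$ is invertible for all sufficiently small $\eps$, forcing $a_\rad=a_\azi=a_\pol=0$ and hence $\nabla I_\eps(W_{r,\theta,\varphi}+n_{\eps,r,\theta,\varphi})=0$. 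The main obstacle is precisely this uniform non-degeneracy of $G$: the azimuthal and polar blocks both scale like $r^2$ with $r\to\infty$ and one has to rule out any accidental linear dependence among $\dot W^\rad,\dot W^\azi,\dot W^\pol$ in the large-$r$ limit, which requires a careful asymptotic computation of the normalized Gram matrix.
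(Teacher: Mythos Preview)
Your plan matches the paper's proof: expand $\nabla I_\eps(W_{r,\theta,\varphi}+n_{\eps,r,\theta,\varphi})$ in the basis $\{\dot W^{\rad},\dot W^{\azi},\dot W^{\pol}\}$, compute $\partial_{\dir}\Phi_\eps$ by the chain rule, differentiate the constraints $\angles{n_{\eps,r,\theta,\varphi}\mid\dot W^{\dir}}_{H^1}=0$ to bound the $\dot n$-contributions by $\norm{n_{\eps,r,\theta,\varphi}}_{H^1}\lesssim\eps^2$, and reduce to invertibility of the resulting $3\times 3$ matrix. Your account is in fact slightly more careful than the paper's: the paper's internal claim asserts the matrix equals $K\norm{\partial_1 U}_{H^1}^2\delta^{\dir_1,\dir_2}+O(\eps^2)$, overlooking the $r^2$ factors in the angular blocks that you correctly isolate; the anisotropic rescaling and the uniform non-degeneracy check of the normalized Gram matrix that you identify as the main obstacle are exactly what is needed to make the argument rigorous.
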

\begin{proof}
Suppose that $\eps_0 > 0$ is furnished by Lemma \ref{solving:lem:solution_auxiliary}, $0 < \eps < \eps_0$ and
$
\parens{r, \theta, \varphi}
\in
R_\eps \times \coi{0, 2 \pi} \times \cci{0, \pi}
$.
Due to Lemma \ref{solving:lem:solution_auxiliary},
\[
\nabla I_\eps \parens{
	W_{r, \theta, \varphi} + n_{\eps, r, \theta, \varphi}
}
=
\sum_{\dir \in \set{\rad, \azi, \pol}}
c_{\eps, r, \theta, \varphi}^{\dir}
\dot{W}_{r, \theta, \varphi}^{\dir}
\]
for a certain
$
\parens{
	c_{\eps, r, \theta, \varphi}^{\rad}; ~
	c_{\eps, r, \theta, \varphi}^{\azi}; ~
	c_{\eps, r, \theta, \varphi}^{\pol}
}
\in \real^3
$.
It then follows from the definition of $\Phi_\eps$ that
\begin{align*}
\partial_{\dir_1} \Phi_\eps \parens{r, \theta, \varphi}
&=
\angles*{
	\dot{W}_{r, \theta, \varphi}^{\dir_1}
	+
	\dot{n}_{\eps, r, \theta, \varphi}^{\dir_1}
	~\middle|~
	\nabla I_\eps \parens{
		W_{r, \theta, \varphi} + n_{\eps, r, \theta, \varphi}
	}
}_{H^1};
\\
&=
\sum_{\dir_2 \in \set{\rad, \azi, \pol}}
c_{\eps, r, \theta, \varphi}^{\dir_2}
\angles*{
	\dot{W}_{r, \theta, \varphi}^{\dir_1}
	+
	\dot{n}_{\eps, r, \theta, \varphi}^{\dir_1}
	~ \middle| ~
	\dot{W}_{r, \theta, \varphi}^{\dir_2}
}_{H^1}.
\end{align*}
At this point, the result is a consequence of the following claim.
\begin{claim}
\label{reduced:claim:identity_matrix}
There exists $\eps_1 \in \ooi{0, \eps_0}$ such that
\[
\abs*{
	\angles*{
		\dot{W}_{r, \theta, \varphi}^{\dir_1}
		+
		\dot{n}_{\eps, r, \theta, \varphi}^{\dir_1}
		~ \middle| ~
		\dot{W}_{r, \theta, \varphi}^{\dir_2}
	}_{H^1}
	-
	K \norm{\partial_1 U}_{H^1}^2 \delta^{\dir_1, \dir_2}
}
\lesssim
\eps^2
\]
for every $\eps \in \ooi{0, \eps_1}$ and
$
\parens{r, \theta, \varphi}
\in
R_\eps \times \coi{0, 2 \pi} \times \cci{0, \pi}
$.
\end{claim}
\begin{proof}
Clearly,
\[
\angles*{
	\dot{W}_{r, \theta, \varphi}^{\dir_1}
	+
	\dot{n}_{\eps, r, \theta, \varphi}^{\dir_1}
	~ \middle| ~
	\dot{W}_{r, \theta, \varphi}^{\dir_2}
}_{H^1}
=
\underbrace{
	\angles{
		\dot{n}_{\eps, r, \theta, \varphi}^{\dir_1}
		\mid
		\dot{W}_{r, \theta, \varphi}^{\dir_2}
	}_{H^1}
}_{\parens{\ast}}
+
\underbrace{
	\angles{
		\dot{W}_{r, \theta, \varphi}^{\dir_1}
		\mid
		\dot{W}_{r, \theta, \varphi}^{\dir_2}
	}_{H^1}
}_{\parens{\ast \ast}}.
\]
Let us estimate $\parens{\ast}$. As
$
\angles{
	n_{\eps, r, \theta, \varphi}
	\mid
	\dot{W}_{r, \theta, \varphi}^{\dir_2}
}_{H^1} = 0
$
for every
$
\parens{r, \theta, \varphi}
\in
R_\eps \times \coi{0, 2 \pi} \times \cci{0, \pi}
$,
we can differentiate this expression to deduce that
\[
\abs{\parens{\ast}}
\leq
\norm{n_{\eps, r, \theta, \varphi}}_{H^1}
\norm{\ddot{W}_{r, \theta, \varphi}^{\dir_1, \dir_2}}_{H^1}.
\]
In view of Lemma \ref{solving:lem:solution_auxiliary},
we have $\abs{\parens{\ast}} \lesssim \eps^2$ for every
$\eps \in \ooi{0, \eps_0}$
and
$
\parens{r, \theta, \varphi}
\in
R_\eps \times \coi{0, 2 \pi} \times \cci{0, \pi}
$.

Now, consider $\parens{\ast \ast}$. We begin by treating the diagonal terms, i.e., when $\dir := \dir_1 = \dir_2$. It follows from Lemma \ref{auxiliary:lem:estimates_from_exponential} that
\[
\abs*{
	\norm{\dot{W}^{\dir}_{r, \theta, \varphi}}_{H^1}^2
	-
	K \norm{\partial_1 U}_{H^1}^2
}
\lesssim
\exp \parens{- \gamma \sigma \eta r}
\]
for every $\dir \in \set{\rad, \azi, \pol}$ and
$
\parens{r, \theta, \varphi}
\in
\ooi{0, \infty} \times \coi{0, 2 \pi} \times \cci{0, \pi}
$.
We proceed to the off-diagonal terms. In view of Lemma \ref{auxiliary:lem:estimates_from_exponential}, we have
$
\abs{\parens{\ast \ast}}
\lesssim
\exp \parens{- \gamma \sigma \eta r}
$
for every $\dir_1 \neq \dir_2$ in $\set{\rad, \azi, \pol}$ and
$
\parens{r, \theta, \varphi}
\in
\ooi{0, \infty} \times \coi{0, 2 \pi} \times \cci{0, \pi}
$.
\end{proof}
\end{proof}

Our next result provides an approximation for the function
$\phi_{\eps, U_{r, \theta, \varphi}^2}$.

\begin{lem}
\label{reduced:lem:approximation_for_phi_eps}
We have
\[
\abs*{
	\phi_{\eps, U_{r, \theta, \varphi}^2} \parens{x}
	-
	\kappa_\eps \parens{x - P \parens{r, \theta, \varphi}}
	\norm{U}_{L^2}^2
}
\lesssim
\eps
\]
for every $\eps > 0$, $x \in \real^3$ and
$
\parens{r, \theta, \varphi}
\in
\ooi{0, \infty} \times \coi{0, 2 \pi} \times \cci{0, \pi}
$.
\end{lem}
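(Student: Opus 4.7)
The plan is to reduce the estimate to a Lipschitz-type bound on $\kappa_\eps$ and then exploit the exponential decay of $U$. Write $P := P \parens{r, \theta, \varphi}$ for brevity. Performing the change of variables $y = z + P$ in the convolution, we get
\[
\phi_{\eps, U_{r, \theta, \varphi}^2} \parens{x}
=
\int \kappa_\eps \parens{x - y} U \parens{y - P}^2 \dif y
=
\int \kappa_\eps \parens{x - P - z} U \parens{z}^2 \dif z.
\]
Subtracting $\kappa_\eps \parens{x - P} \norm{U}_{L^2}^2 = \kappa_\eps \parens{x - P} \int U^2$ gives
\[
\phi_{\eps, U_{r, \theta, \varphi}^2} \parens{x}
-
\kappa_\eps \parens{x - P} \norm{U}_{L^2}^2
=
\int \brackets*{
	\kappa_\eps \parens{x - P - z} - \kappa_\eps \parens{x - P}
} U \parens{z}^2 \dif z,
\]
so it suffices to control the pointwise difference of $\kappa_\eps$ on the right.

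The next step is to verify that $\kappa \in C^{0, 1} \parens{\real^3}$. Writing $\kappa \parens{x} = \parens{1 - e^{- \abs{x} / a}} / \abs{x}$ and performing a Taylor expansion at the origin yields $\kappa \parens{x} = 1 / a - \abs{x} / \parens{2 a^2} + O \parens{\abs{x}^2}$ as $\abs{x} \to 0$, so $\kappa$ extends smoothly across $0$; away from the origin, a direct computation shows that $\abs{\nabla \kappa \parens{x}}$ decays like $1 / \abs{x}^2$ as $\abs{x} \to \infty$. Therefore there exists $L > 0$ with $\abs{\kappa \parens{y_1} - \kappa \parens{y_2}} \leq L \abs{y_1 - y_2}$ for every $y_1, y_2 \in \real^3$, and consequently $\abs{\kappa_\eps \parens{y_1} - \kappa_\eps \parens{y_2}} \leq L \eps \abs{y_1 - y_2}$ since $\kappa_\eps = \kappa \parens{\eps \cdot}$.

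Combining the previous two steps,
\[
\abs*{
	\phi_{\eps, U_{r, \theta, \varphi}^2} \parens{x}
	-
	\kappa_\eps \parens{x - P} \norm{U}_{L^2}^2
}
\leq
L \eps \int \abs{z} U \parens{z}^2 \dif z,
\]
and the integral on the right is finite (and independent of $x$, $r$, $\theta$, $\varphi$) by the exponential decay \eqref{intro:eqn:exponential_decay} of $U$. This yields the claimed uniform estimate. No substantive obstacle is expected: the only delicate point is the verification of the global Lipschitz regularity of $\kappa$ at the origin, which is immediate from the Taylor expansion above.
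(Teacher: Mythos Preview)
Your proof is correct and follows essentially the same route as the paper's own argument: both perform the change of variable to rewrite the difference as an integral against $U^2$, then use the global Lipschitz regularity of $\kappa$ (the paper simply asserts $\sup \abs{\nabla \kappa} = 1/2$, while you justify boundedness of $\nabla \kappa$ via the Taylor expansion at the origin and the $1/\abs{x}^2$ decay at infinity) to extract the factor $\eps$ and reduce to the finiteness of $\int \abs{z}\,U(z)^2\,\dif z$.
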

\begin{proof}
A change of variable shows that
\begin{multline*}
\abs*{
	\phi_{\eps, U_{r, \theta, \varphi}^2} \parens{x}
	-
	\kappa_\eps \parens*{x - P \parens{r, \theta, \varphi}}
	\norm{U}_{L^2}^2
}
=
\\
=
\abs*{
	\int
		\kappa_\eps \parens*{
			x-y-P \parens{r, \theta, \varphi}
		}
		U \parens{y}^2
	\dif y
	-
	\kappa_\eps \parens*{x - P \parens{r, \theta, \varphi}}
	\norm{U}_{L^2}^2
}.
\end{multline*}
As $\sup \abs{\nabla \kappa} = 1 / 2$, we obtain
\[
\abs*{
	\phi_{\eps, U_{r, \theta, \varphi}^2} \parens{x}
	-
	\kappa_\eps \parens*{x - P \parens{r, \theta, \varphi}}
	\norm{U}_{L^2}^2
}
\leq
\frac{\eps}{2} \int \abs{y} U \parens{y}^2 \dif y,
\]
hence the result.
\end{proof}

Let us obtain an expansion for
$I_\eps \parens{W_{r, \theta, \varphi}}$.

\begin{lem}
\label{reduced:lem:expansion_of_Ieps}
There exist $\eps_0 > 0$ and
$\mu \in \oci{\parens{\lambda + 1} / \parens{\alpha + 1}, 1}$
such that
\begin{multline*}
\left|
	I_\eps \parens{W_{r, \theta, \varphi}}
	-
	K C_0
	-
	C_1 \parens*{
		\sum_{1 \leq j \leq K}
		V_\eps \parens*{P \parens{r, \theta, \varphi_j}}
	}
	-
\right.
\\
\left.
	-
	C_1^2 \eps^3
	\parens*{
		K
		+
		2
		\sum_{1 \leq j < k \leq K}
			\kappa_\eps \parens{r d_{j, k}}
	}
\right|
\lesssim
\eps^{3 + \mu}
\end{multline*}
for every
$\eps \in \ooi{0, \eps_0}$ and
$
\parens{r, \theta, \varphi}
\in
\overline{R_\eps} \times \coi{0, 2 \pi} \times \cci{0, \pi}
$,
where
\[
C_0
:=
\frac{1}{2} \norm{U}_{D^{1, 2}}^2
-
\frac{1}{p + 1}\norm{U}_{L^{p + 1}}^{p + 1}
\quad \text{and} \quad
C_1 := \frac{1}{2} \norm{U}_{L^2}^2.
\]
\end{lem}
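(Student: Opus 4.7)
The plan is to expand $I_\eps(W_{r,\theta,\varphi})$ by writing $W = \sum_{j=1}^K U_j$ with $U_j := U_{r,\theta,\varphi_j}$ and treating the four summands of $I_\eps$ independently. A guiding principle is that any ``interaction integral'' --- a product $\int U_j^a U_k^b$ with $j\neq k$, a kinetic inner product $\int \nabla U_j \cdot \nabla U_k$, or a Bopp--Podolsky integral containing a factor $U_jU_k$ with $j \neq k$ --- is exponentially small in $r$: for the kinetic inner products this is Lemma~\ref{auxiliary:lem:estimates_from_exponential}; for $L^p$-type interactions one splits the integration domain into neighborhoods of the peaks and uses \eqref{intro:eqn:exponential_decay}; for Bopp--Podolsky mixed terms one uses the bound $\abs*{\int \phi_{\eps, u_1 u_2}u_3u_4} \lesssim \prod_i\|u_i\|_{L^2}$ together with $\|U_jU_k\|_{L^2}$ being exponentially small. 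Since $r \in R_\eps$ satisfies $r \gtrsim \eps^{\beta-(\alpha-\lambda)/(\alpha+1)} \to \infty$, each such exponential is $o(\eps^N)$ for every $N$ and is absorbed into $\eps^{3+\mu}$; only the diagonal-in-$j$ pieces need tracking.

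The diagonal kinetic contribution is $\frac{K}{2}\|\nabla U\|_{L^2}^2$. For the nonlinear term, I partition $\real^3$ into pairwise disjoint neighborhoods of the $P_j := P(r,\theta,\varphi_j)$ and on each expand $W^{p+1}$ by isolating the dominant $U_k^{p+1}$; the remainder reduces to interaction integrals controlled above, yielding $-\frac{K}{p+1}\|U\|_{L^{p+1}}^{p+1}$. Combined, these produce $KC_0 + \frac{K}{2}\|U\|_{L^2}^2$. For the potential, I discard the exponentially small $j \neq k$ cross terms and Taylor-expand each $\frac{1}{2}\int V_\eps U_j^2$ via $V_\eps(P_j + y) = V(\eps P_j + \eps y)$: the zeroth order gives $C_1 V_\eps(P_j)$, the first order vanishes by radial symmetry of $U^2$, and the Hessian term is $O(\eps^{2\alpha}r^{2\alpha-2})$, since hypothesis~\ref{V_2} forces $V''(x) = O(|x|^{2\alpha-2})$ near $0$. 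The leftover $\frac{K}{2}\|U\|_{L^2}^2$ from the kinetic/nonlinear step combines with the $V_\eps(P_j)\approx 1$ part of $C_1 V_\eps(P_j)$ to rebuild $C_1 \sum_j V_\eps(P_j)$ cleanly.

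For the Bopp--Podolsky term, multilinearity of $(u_1,\ldots,u_4)\mapsto\int\phi_{\eps,u_1u_2}u_3u_4$ decomposes $\int \phi_{\eps,W^2}W^2$ into diagonal-on-diagonal pieces $\int\phi_{\eps,U_j^2}U_k^2$ and mixed terms containing some $U_\ell U_m$ with $\ell\neq m$ (which are exponentially small by the general principle). For each $\int \phi_{\eps,U_j^2}U_k^2$, Lemma~\ref{reduced:lem:approximation_for_phi_eps} yields $\phi_{\eps,U_j^2}(x) = \kappa_\eps(x-P_j)\|U\|_{L^2}^2 + O(\eps)$; one further Taylor step using $\sup|\nabla\kappa|\le 1/2$ then replaces $\int \kappa_\eps(x-P_j)U_k^2(x)\,dx$ by $\kappa_\eps(r\,d_{j,k})\|U\|_{L^2}^2 + O(\eps)$. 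Summing these contributions and multiplying by $\eps^3/4$ produces the stated main-order Bopp--Podolsky contribution up to an $O(\eps^4)$ remainder.

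The main obstacle is the delicate balancing in the Hessian-of-$V$ estimate. The admissibility condition $\max_{|x|=1}V(\eps r x) < 1 + \eps^{3\alpha/(\alpha+1)}$, combined with hypothesis~\ref{V_2}, effectively caps $r \lesssim \eps^{(1-2\alpha)/(2(\alpha+1))}$, which turns the Hessian bound into $\lesssim \eps^{(5\alpha-1)/(\alpha+1)}$; one must verify that this can be written as $\eps^{3+\mu}$ for some $\mu$ lying in $((\lambda+1)/(\alpha+1), 1]$. The upper bound $\mu \le 1$ comes from the $O(\eps^4)$-errors in the Bopp--Podolsky and nonlinear steps, while the lower bound is required for the subsequent analysis of $\Phi_\eps$; verifying that both can be satisfied simultaneously is precisely where the hypothesis $\alpha > 3+\sqrt{7}$ together with $\lambda \in (2(\alpha+2)/(\alpha-1), 2\alpha-8)$ enters, as anticipated in Remark~\ref{pseudo:rmk}.
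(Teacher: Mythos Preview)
Your overall strategy matches the paper's: decompose $I_\eps(W)$ into kinetic, potential, Bopp--Podolsky and nonlinear pieces, isolate the diagonal-in-$j$ contributions, and use exponential decay (Lemma~\ref{auxiliary:lem:estimates_from_exponential} and \eqref{intro:eqn:exponential_decay}) to discard all interaction terms as $o(\eps^N)$. Your treatment of the Bopp--Podolsky term via Lemma~\ref{reduced:lem:approximation_for_phi_eps} and of the $L^{p+1}$-term by partitioning into balls are exactly what the paper does.

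There is a bookkeeping slip in your second paragraph: the diagonal kinetic plus nonlinear contributions give precisely $K C_0$, \emph{not} $K C_0 + \tfrac{K}{2}\norm{U}_{L^2}^2$. There is no ``leftover $\tfrac{K}{2}\norm{U}_{L^2}^2$'' to combine with anything; the potential zeroth order already yields $C_1\sum_j V_\eps(P_j)$ on its own. Simply delete that extra term and the ``rebuild'' sentence.

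The one substantive difference is in the potential estimate. You stop at second order with Lagrange remainder, bounding $|V''(\xi)| \lesssim |\xi|^{2\alpha-2}$ (which follows from $g(x)=O(|x|^2)$ via \ref{V_2}) and then using the $R_\eps$ condition together with positive-definiteness of $g''(0)$ to cap $\eps r \lesssim \eps^{3/(2(\alpha+1))}$. The paper instead expands to \emph{third} order, bounding $|V''(\eps P_j)| \lesssim \eps^{3(\alpha-2)/(\alpha+1)}$ directly from $g(\eps P_j)^\alpha < \eps^{3\alpha/(\alpha+1)}$, and then controls the $V'''$-remainder separately. Your route is slightly cleaner and in fact yields the sharper exponent $(5\alpha-1)/(\alpha+1)$ versus the paper's $(5\alpha-4)/(\alpha+1)$; it only needs $\lambda < 2\alpha-5$ rather than $\lambda < 2\alpha-8$. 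Both are comfortably implied by the standing hypotheses, so either approach works. Just be explicit that you truncate the $y$-integration to a ball (say $|y|\le \eps^{-\beta}$) so that the intermediate point $\xi$ stays in $\overline{B_1(0)}$ where \ref{V_2} applies; the tail is handled by \eqref{intro:eqn:exponential_decay}.
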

\begin{proof}
As
\[
\norm{W_{r, \theta, \varphi}}_{H^1_\eps}^2
=
\parens*{
	\sum_{1 \leq j \leq K}
	\norm*{
		U_{r, \theta, \varphi_j}
	}_{H^1_\eps}^2
}
+
2
\sum_{1 \leq j < k \leq K}
\angles{
	U_{r, \theta, \varphi_j} \mid U_{r, \theta, \varphi_k}
}_{H^1_\eps},
\]
\[
I_\eps\parens{U_{r, \theta, \varphi_j}}
=
C_0
+
\frac{1}{2} \int \parens*{V_\eps U_{r, \theta, \varphi_j}^2}
+
\frac{\eps^3}{4}
\int \parens*{
	\phi_{\eps, U_{r, \theta, \varphi_j}^2}
	U_{r, \theta, \varphi_j}^2
}
\]
and
\begin{multline*}
\int \parens*{
	\phi_{\eps, W_{r, \theta, \varphi}^2} W_{r, \theta, \varphi}^2
}
=
\parens*{
	\sum_{1 \leq j \leq K}
	\int \parens*{
		\phi_{\eps, U_{r, \theta, \varphi_j}^2}
		U_{r, \theta, \varphi_j}^2
	}
}
+
\\
+
2
\parens*{
	\sum_{1 \leq j < k \leq K}
	\int \parens*{
		\phi_{\eps, U_{r, \theta, \varphi_j}^2}
		U_{r, \theta, \varphi_k}^2
	}
}
+
\\
+
2 \parens*{
	\sum_{\substack{1 \leq j \leq K; \\ 1 \leq k < l \leq K}}
	\int \parens*{
		\phi_{\eps, U_{r, \theta, \varphi_j}^2}
		U_{r, \theta, \varphi_k}
		U_{r, \theta, \varphi_l}
	}
}
+
\\
+
2
\sum_{1 \leq j < k \leq K}
\int \parens*{
	\phi_{\eps, W_{r, \theta, \varphi}^2}
	U_{r, \theta, \varphi_j}
	U_{r, \theta, \varphi_k}
},
\end{multline*}
we deduce that
\begin{multline*}
I_\eps \parens{W_{r, \theta, \varphi}}
=
K C_0
+
\frac{1}{2}
\parens*{
	\sum_{1 \leq j \leq K}
	\int \parens*{
		V_\eps U_{r, \theta, \varphi_j}^2
	}
}
+
\\
+
\frac{\eps^3}{4}
\parens*{
	\sum_{1 \leq j \leq K}
	\int \parens*{
		\phi_{\eps, U_{r, \theta, \varphi_j}^2}
		U_{r, \theta, \varphi_j}^2
	}
}
+
\parens*{
	\sum_{1 \leq j < k \leq K}
	\angles{
		U_{r, \theta, \varphi_j} \mid U_{r, \theta, \varphi_k}
	}_{H^1_\eps}
}
+
\\
+
\frac{\eps^3}{2}
\parens*{
	\sum_{1 \leq j < k \leq K}
	\int \parens*{
		\phi_{\eps, U_{r, \theta, \varphi_j}^2}
		U_{r, \theta, \varphi_k}^2
	}
}
+
\\
+
\frac{\eps^3}{2}
\parens*{
	\sum_{\substack{1 \leq j \leq K; \\ 1 \leq k < l \leq K}}
	\int \parens*{
		\phi_{\eps, U_{r, \theta, \varphi_j}^2}
		U_{r, \theta, \varphi_k}
		U_{r, \theta, \varphi_l}
	}
}
+
\\
+
\frac{\eps^3}{2}
\parens*{
	\sum_{1 \leq j < k \leq K}
	\int \parens*{
		\phi_{\eps, W_{r, \theta, \varphi}^2}
		U_{r, \theta, \varphi_j}
		U_{r, \theta, \varphi_k}
	}
}
-
\\
-
\frac{1}{p+1}
\parens*{
	\norm{W_{r, \theta, \varphi}}_{L^{p+1}}^{p+1}
	-
	K \norm{U}_{L^{p + 1}}^{p + 1}
}.
\end{multline*}

\emph{Expansion of
$\parens{1 / 2} \int \parens{V_\eps U_{r, \theta, \varphi_j}^2}$.}
Up to shrinking $\eps_0$, we can suppose that
$\eps_0 r + \eps_0^{1 - \beta} < 2$
for every $r \in \overline{R_{\eps_0}}$.
Consider the decomposition
\[
\real^3
=
\parens*{
	\real^3
	\setminus
	B_{\eps^{- \beta}} \parens*{P \parens{r, \theta, \varphi_j}}
}
\cup
B_{\eps^{- \beta}} \parens*{P \parens{r, \theta, \varphi_j}}.
\]
An estimate for the integral over
$
\real^3
\setminus
B_{\eps^{- \beta}} \parens*{P \parens{r, \theta, \varphi_j}}
$
follows directly from the exponential decay \eqref{intro:eqn:exponential_decay}. A Taylor expansion shows that
\begin{multline*}
\left|
\int_{
	B_{\eps^{- \beta}} \parens*{P \parens{r, \theta, \varphi_j}}
}
	U_{r, \theta, \varphi_j} \parens{x}^2
	\left(
		V_\eps \parens{x}
		-
		V_\eps \parens*{P \parens{r, \theta, \varphi_j}}
		-
	\right.
\right.
\\
\left.
	\left.
		-
		\eps
		\nabla V \parens{\eps P \parens{r, \theta, \varphi_j}}
		\cdot
		\parens*{x - P \parens{r, \theta, \varphi_j}}
	\right)
\dif x
\right|
\leq
\\
\leq
\underbrace{
	\frac{\eps^2}{2}
	\abs*{V'' \parens*{\eps P \parens{r, \theta, \varphi_j}}}
	\int
		\abs{x - P \parens{r, \theta, \varphi_j}}^2
		U_{r, \theta, \varphi_j} \parens{x}^2
	\dif x
}_{\parens{\ast}}
+
\\
+
\underbrace{
	\frac{\eps^3}{3!}
	\parens*{
		\sup_{
			x
			\in
			B_{\eps^{1 - \beta}}
			\parens{P \parens{r, \theta, \varphi_j}}
		}
		\abs*{V''' \parens{x}}
	}
	\int
		\abs{x - P \parens{r, \theta, \varphi_j}}^3
		U_{r, \theta, \varphi_j} \parens{x}^2
	\dif x
}_{\parens{\ast \ast}}.
\end{multline*}
As $U_{r, \theta, \varphi_j}$ is spherically symmetric around
$P \parens{r, \theta, \varphi_j}$, we deduce that
\[
\int_{
	B_{\eps^{- \beta}} \parens*{P \parens{r, \theta, \varphi_j}}
}
	U_{r, \theta, \varphi_j} \parens{x}^2
	\nabla V \parens{\eps P \parens{r, \theta, \varphi_j}}
	\cdot
	\parens*{x - P \parens{r, \theta, \varphi_j}}
\dif x
=
0.
\]

Let us estimate $\parens{\ast}$. It follows from \ref{V_2} and the definition of $R_\eps$ that, up to shrinking $\eps_0$,
\[
\abs*{V'' \parens*{\eps P \parens{r, \theta, \varphi_j}}}
\lesssim
\eps^{3 \frac{\alpha - 2}{\alpha + 1}}
\]
for every
$\eps \in \ooi{0, \eps_0}$ and
$
\parens{r, \theta, \varphi}
\in
\overline{R_\eps} \times \coi{0, 2 \pi} \times \cci{0, \pi}
$.
We just obtained a good enough estimate for the purposes of the lemma. Indeed,
\[
2 + 3 \frac{\alpha - 2}{\alpha + 1}
>
3 + \frac{\lambda + 1}{\alpha + 1}
\]
because $\lambda < 2 \alpha - 8$.

Now, we estimate $\parens{\ast \ast}$. On one hand, it follows from \ref{V_2} that
\[
\abs*{V''' \parens*{\eps P \parens{r, \theta, \varphi_j}}}
\lesssim
\eps^{3 \frac{\alpha - 3}{\alpha + 1}}
\]
for every $\eps \in \ooi{0, \eps_0}$ and
$
\parens{r, \theta, \varphi}
\in
\overline{R_\eps} \times \coi{0, 2 \pi} \times \cci{0, \pi}
$.
On the other hand,
\[
\abs*{
	V''' \parens{x}
	-
	V''' \parens*{\eps P \parens{r, \theta, \varphi_j}}
}
<
\eps^{1 - \beta}
\]
for every
$x \in B_{\eps^{1 - \beta}} \parens{P \parens{r, \theta, \varphi_j}}$. Therefore,
\[
\sup_{
	x
	\in
	B_{\eps^{1 - \beta}} \parens{P \parens{r, \theta, \varphi_j}}
}
\abs*{V''' \parens{x}}
\lesssim
\eps^{\min \parens*{3 \frac{\alpha - 3}{\alpha + 1}, 1 - \beta}}
\]
for every $\eps \in \ooi{0, \eps_0}$ and
$
\parens{r, \theta, \varphi}
\in
\overline{R_\eps} \times \coi{0, 2 \pi} \times \cci{0, \pi}
$.
We claim that this estimate suffices for the purposes of the lemma. Indeed,
\[
\min \parens*{3 \frac{\alpha - 3}{\alpha + 1}, 1 - \beta}
>
\frac{\lambda + 1}{\alpha + 1}
\]
because
$\beta < \parens{\alpha - \lambda} / \parens{\alpha + 1}$
and
$\lambda < 3 \alpha - 10$.

\emph{Expansion of
$
\parens{\eps^3 / 4}
\int \parens{
	\phi_{\eps, U_{r, \theta, \varphi}^2}
	U_{r, \theta, \varphi}^2
}
$
.}
In view of \eqref{intro:eqn:exponential_decay} and Lemma \ref{reduced:lem:approximation_for_phi_eps},
\begin{multline*}
\abs*{
	\int \parens*{
		\phi_{\eps, U_{r, \theta, \varphi}^2}
		U_{r, \theta, \varphi}^2
	}
	-
	\phi_{\eps, U_{r, \theta, \varphi}^2}
	\parens*{P \parens{r, \theta, \varphi}}
	\norm{U}_{L^2}^2
}
=
\\
=
\abs*{
	\int \parens*{
		\phi_{\eps, U_{r, \theta, \varphi}^2}
		U_{r, \theta, \varphi}^2
	}
	-
	\norm{U}_{L^2}^2
	\int
		\kappa_\eps \parens*{x - P \parens{r, \theta, \varphi}}
		U_{r, \theta, \varphi} \parens{x}^2
	\dif x
}
\lesssim
\eps
\end{multline*}
and
$
\abs{
	\phi_{\eps, U_{r, \theta, \varphi}^2}
	\parens{P \parens{r, \theta, \varphi}}
	-
	\norm{U}_{L^2}^2
}
\lesssim
\eps
$,
so
\[
\abs*{
	\frac{\eps^3}{4}
	\int \parens*{
		\phi_{\eps, U_{r, \theta, \varphi}^2}
		U_{r, \theta, \varphi}^2
	}
	-
	\frac{\eps^3}{4} \norm{U}_{L^2}^4
}
\lesssim
\eps^4
\]
for every $\eps > 0$ and
$
\parens{r, \theta, \varphi}
\in
\ooi{0, \infty} \times \coi{0, 2 \pi} \times \cci{0, \pi}
$.

\emph{Estimation of
$
\angles{
	U_{r, \theta, \varphi_j} \mid U_{r, \theta, \varphi_k}
}_{H^1_\eps}
$
for $j \neq k$.}
Clearly,
\[
\angles{
	U_{r, \theta, \varphi_j} \mid U_{r, \theta, \varphi_k}
}_{H^1_\eps}
=
\angles{
	U_{r, \theta, \varphi_j} \mid U_{r, \theta, \varphi_k}
}_{H^1}
+
\int \parens*{
	\parens{V_\eps - 1}
	U_{r, \theta, \varphi_j}
	U_{r, \theta, \varphi_k}
}.
\]
As $U_{r, \theta, \varphi_j}$ solves \eqref{intro:eqn:NLSE}, we obtain
\[
\angles{
	U_{r, \theta, \varphi_j} \mid U_{r, \theta, \varphi_k}
}_{H^1_\eps}
=
\int \parens*{U_{r, \theta, \varphi_j}^p U_{r, \theta, \varphi_k}}
+
\int \parens*{
	\parens{V_\eps - 1}
	U_{r, \theta, \varphi_j}
	U_{r, \theta, \varphi_k}
}.
\]
At this point, the estimate follows from the exponential decay \eqref{intro:eqn:exponential_decay}.

\emph{Expansion of
$
\int \parens{
	\phi_{\eps, U_{r, \theta, \varphi_j}^2}
	U_{r, \theta, \varphi_k}^2
}
$
for $j \neq k$.
}
It follows from Lemma \ref{reduced:lem:approximation_for_phi_eps} that
\[
\abs*{
	\frac{\eps^3}{2}
	\int \parens*{
		\phi_{\eps, U_{r, \theta, \varphi_j}^2}
		U_{r, \theta, \varphi_k}^2
	}
	-
	\frac{\eps^3}{2}
	\norm{U}_{L^2}^4
	\kappa_\eps \parens{r d_{j, k}}
}
\lesssim
\eps^4
\]
for every $\eps > 0$ and
$
\parens{r, \theta, \varphi}
\in
\ooi{0, \infty} \times \coi{0, 2 \pi} \times \cci{0, \pi}
$.

\emph{Estimation of
\[
\int \parens*{
	\phi_{\eps, W_{r, \theta, \varphi}^2}
	U_{r, \theta, \varphi_j}
	U_{r, \theta, \varphi_k}
}
\quad \text{and} \quad
\int \parens*{
	\phi_{\eps, U_{r, \theta, \varphi_l}^2}
	U_{r, \theta, \varphi_j}
	U_{r, \theta, \varphi_k}
}
\]
for $j \neq k$.
}
Corollary of the exponential decay \eqref{intro:eqn:exponential_decay}.

\emph{Estimation of
$
\norm{W_{r, \theta, \varphi}}_{L^{p+1}}^{p+1}
-
K \norm{U}_{L^{p + 1}}^{p + 1}
$.}
It suffices to argue as in the proof of Lemma \ref{pseudo:lem:estimate}.
\end{proof}

It is easy to check that the derivatives of $\Phi_\eps$ are bounded, so we can let
$
\overline{\Phi_\eps}
\colon
\overline{R_\eps} \times \coi{0, 2 \pi} \times \ooi{0, \pi}
\to
\real
$
denote the unique continuous extension of $\Phi_\eps$. In view of the previous result, we obtain the following expansion of
$\overline{\Phi_\eps}$.

\begin{cor}
\label{existence:cor:estimate_reduced}
There exist $\eps_0 > 0$ and
$\mu \in \oci{\parens{\lambda + 1} / \parens{\alpha + 1}, 1}$
such that
\begin{multline*}
\left|
	\overline{\Phi_\eps} \parens{r, \theta, \varphi}
	-
	K C_0
	-
	C_1 \parens*{
		\sum_{1 \leq j \leq K}
		V_\eps \parens*{P \parens{r, \theta, \varphi_j}}
	}
	-
\right.
\\
\left.
	-
	C_1^2 \eps^3
	\parens*{
		K
		+
		2
		\sum_{1 \leq j < k \leq K}
			\kappa_\eps \parens{r d_{j, k}}
	}
\right|
\lesssim
\eps^{3 + \mu}
\end{multline*}
for every $\eps \in \ooi{0, \eps_0}$ and
$
\parens{r, \theta, \varphi}
\in
\overline{R_\eps} \times \coi{0, 2 \pi} \times \cci{0, \pi}
$.
\end{cor}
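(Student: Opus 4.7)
The plan is to combine Lemma~\ref{reduced:lem:expansion_of_Ieps}, which already supplies the desired expansion for $I_\eps \parens{W_{r, \theta, \varphi}}$ on $\overline{R_\eps} \times \coi{0, 2 \pi} \times \cci{0, \pi}$, with a quantitative bound on $\Phi_\eps \parens{r, \theta, \varphi} - I_\eps \parens{W_{r, \theta, \varphi}}$ for $r \in R_\eps$, and then pass to the closure by continuity. Since $\Phi_\eps \parens{r, \theta, \varphi} = I_\eps \parens{W_{r, \theta, \varphi} + n_{\eps, r, \theta, \varphi}}$ and Lemma~\ref{solving:lem:solution_auxiliary} gives $\norm{n_{\eps, r, \theta, \varphi}}_{H^1} \lesssim \eps^2$, this difference should be of order $\eps^4$, which is absorbed into $\eps^{3 + \mu}$ because the $\mu$ produced by Lemma~\ref{reduced:lem:expansion_of_Ieps} satisfies $\mu \leq 1$.

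To make this precise I will use Taylor's formula with integral remainder,
\begin{align*}
\Phi_\eps \parens{r, \theta, \varphi} - I_\eps \parens{W_{r, \theta, \varphi}}
&=
I_\eps' \parens{W_{r, \theta, \varphi}} \brackets{n_{\eps, r, \theta, \varphi}}
\\
&\quad +
\int_0^1 \parens{1 - t}\, I_\eps'' \parens{W_{r, \theta, \varphi} + t\, n_{\eps, r, \theta, \varphi}} \brackets{n_{\eps, r, \theta, \varphi}, n_{\eps, r, \theta, \varphi}}\, \dif t.
\end{align*}
Cauchy--Schwarz together with Lemmas~\ref{pseudo:lem:estimate} and~\ref{solving:lem:solution_auxiliary} bound the linear term by $\norm{\nabla I_\eps \parens{W_{r, \theta, \varphi}}}_{H^1} \norm{n_{\eps, r, \theta, \varphi}}_{H^1} \lesssim \eps^4$. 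For the quadratic remainder, the explicit expression for $I_\eps'' \parens{u} \brackets{w, w}$ from Section~\ref{variational}, combined with the multilinear estimate for $\phi_{\eps, \cdot\, \cdot}$ and the Sobolev embedding $H^1 \hookrightarrow L^{p + 1}$ applied to the $\abs{u}^{p - 1} w^2$-term, yields $\abs{I_\eps'' \parens{u} \brackets{w, w}} \lesssim \norm{w}_{H^1}^2$ uniformly for $u$ in any bounded subset of $H^1$; since $\norm{W_{r, \theta, \varphi} + t\, n_{\eps, r, \theta, \varphi}}_{H^1}$ is uniformly bounded in $\eps$, $t$ and $\parens{r, \theta, \varphi}$, this remainder is $\lesssim \norm{n_{\eps, r, \theta, \varphi}}_{H^1}^2 \lesssim \eps^4$.

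Putting the two bounds together with Lemma~\ref{reduced:lem:expansion_of_Ieps} establishes the desired estimate for $\parens{r, \theta, \varphi} \in R_\eps \times \coi{0, 2 \pi} \times \cci{0, \pi}$ with the same $\mu$ furnished by that lemma, because $\eps^4 \lesssim \eps^{3 + \mu}$ whenever $\mu \leq 1$. The extension to $\overline{R_\eps}$ is then immediate: the explicit sum on the right-hand side of the inequality is continuous in $\parens{r, \theta, \varphi}$, and $\overline{\Phi_\eps}$ is by construction the continuous extension of $\Phi_\eps$, so the inequality survives taking closures. I do not foresee any substantive obstacle here; this step is essentially a routine first-order Taylor expansion whose whole content is packaged into the two previously established $\eps^2$-size estimates.
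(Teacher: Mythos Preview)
Your proposal is correct and follows essentially the same route as the paper: a Taylor expansion of $I_\eps$ around $W_{r,\theta,\varphi}$, with the first-order term controlled by Lemma~\ref{pseudo:lem:estimate} and the second-order term by a uniform bound on $I_\eps''$ together with Lemma~\ref{solving:lem:solution_auxiliary}, yielding $\abs{\Phi_\eps - I_\eps(W_{r,\theta,\varphi})}\lesssim\eps^4$, and then invoking Lemma~\ref{reduced:lem:expansion_of_Ieps}. Your treatment is in fact slightly more careful than the paper's, since you bound $I_\eps''$ along the whole segment $W_{r,\theta,\varphi}+t\,n_{\eps,r,\theta,\varphi}$ rather than only at the base point, and you make the passage to $\overline{R_\eps}$ explicit.
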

\begin{proof}
It is easy to check that
\[
I_\eps'' \parens{W_{r, \theta, \varphi}} \brackets{u_1, u_2}
\lesssim
\parens{1 + \eps^3} \norm{u_1}_{H^1} \norm{u_2}_{H^1}
\]
for every $\eps > 0$,
$
\parens{r, \theta, \varphi}
\in
\ooi{0, \infty} \times \coi{0, 2 \pi} \times \cci{0, \pi}
$
and $u_1, u_2 \in H^1$. As such, we can use a Taylor expansion together with Lemmas \ref{pseudo:lem:estimate} and \ref{solving:lem:solution_auxiliary} to prove that
\[
\abs*{
	\overline{\Phi_\eps} \parens{r, \theta, \varphi}
	-
	I_\eps \parens{W_{r, \theta, \varphi}}
}
\lesssim
\eps^4
\]
for every
$\eps \in \ooi{0, \eps_0}$
and
$
\parens{r, \theta, \varphi}
\in
\overline{R_\eps} \times \coi{0, 2 \pi} \times \cci{0, \pi}
$,
so the result follows from Lemma \ref{reduced:lem:expansion_of_Ieps}.
\end{proof}

Let us use the previous expansion to prove that $\Phi_\eps$ has a minimum point, hence a critical point.

\begin{lem}
\label{reduced_functional:lem:existence_of_minimum}
We can fix $\eps_0 > 0$ such that given $\eps \in \ooi{0, \eps_0}$, there exists
$
\parens{r_\eps, \theta_\eps, \varphi_\eps}
\in
R_\eps \times \coi{0, 2 \pi} \times \cci{0, \pi}
$
such that
$
\Phi_\eps \parens{r_\eps, \theta_\eps, \varphi_\eps}
=
\inf \Phi_\eps
$.
\end{lem}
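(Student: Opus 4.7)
The plan is to combine a compactness argument with the explicit expansion of $\overline{\Phi_\eps}$ furnished by Corollary \ref{existence:cor:estimate_reduced}. Since $\overline{R_\eps}$ is a closed bounded subinterval of $\ooi{0, \infty}$, the set $\overline{R_\eps} \times \coi{0, 2 \pi} \times \cci{0, \pi}$ is compact (after identifying $0$ with $2 \pi$ in the azimuth, which is harmless because $\overline{\Phi_\eps}$ is $2 \pi$-periodic in $\theta$). Hence $\overline{\Phi_\eps}$ attains its infimum at some $(r_\eps, \theta_\eps, \varphi_\eps)$; the remaining task is to verify that, for $\eps$ small, this minimizer does not lie on the $r$-boundary of $R_\eps$, so that $\Phi_\eps(r_\eps, \theta_\eps, \varphi_\eps) = \inf \Phi_\eps$.

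I would fix the interior reference point $\hat r_\eps := \eps^{- (\alpha - \lambda)/(\alpha + 1)} \in R_\eps$ and compare $\overline{\Phi_\eps}$ at $\hat r_\eps$ against its value on each boundary component of $R_\eps$: the inner boundary $r_- := \eps^{\beta - (\alpha - \lambda)/(\alpha + 1)}$, the algebraic outer boundary $r_+ := \eps^{- \alpha/(\alpha + 1)}$, and the level-set boundary $\set*{r : \max_{\abs{x} = 1} V(\eps r x) = 1 + \eps^{3 \alpha/(\alpha + 1)}}$. Using Corollary \ref{existence:cor:estimate_reduced}, a Taylor expansion of $g$ around $0$ (which gives $V(y) - 1 \asymp \abs{y}^{2 \alpha}$ on $\overline{B_1(0)}$), and the near-origin expansion $\kappa(y) = 1/a - \abs{y}/(2 a^2) + O(\abs{y}^2)$, I would isolate the two $r$-dependent contributions to $\overline{\Phi_\eps}$: the potential term $C_1 \sum_j V_\eps(P(r, \theta, \varphi_j))$ exceeds $K C_1$ by a quantity comparable to $K (\eps r)^{2 \alpha}$ (strictly increasing in $r$), while the interaction term $2 C_1^2 \eps^3 \sum_{j < k} \kappa_\eps(r d_{j, k})$ equals $C_1^2 \eps^3 K (K - 1)/a$ minus a correction of order $\eps^4 r$ (decreasing to leading order in $r$). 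The choice of $\hat r_\eps$ balances these competing monotonicities.

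The outer boundaries are ruled out directly: at $r_+$ the potential excess is of order $\eps^{2 \alpha/(\alpha + 1)}$, and at the level-set boundary it is bounded below by $c \eps^{3 \alpha/(\alpha + 1)}$ via the uniform lower bound $V(y) - 1 \gtrsim \abs{y}^{2 \alpha}$; both exceed the reference excess $(\eps \hat r_\eps)^{2 \alpha} = \eps^{2 \alpha (\lambda + 1)/(\alpha + 1)}$ at $\hat r_\eps$ (using $\lambda < \alpha - 1$ and $\lambda > 1/2$, respectively), and both dominate the at-most $\eps^{(3 \alpha + 4)/(\alpha + 1)}$-sized gain available in the interaction term. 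The hard part, and the reason for the sharp hypothesis $\lambda > 2(\alpha + 2)/(\alpha - 1)$ highlighted in Remark \ref{pseudo:rmk}, is the inner boundary $r_-$: the potential gain from moving from $\hat r_\eps$ to $r_-$ is at most $(\eps \hat r_\eps)^{2 \alpha} \asymp \eps^{2 \alpha (\lambda + 1)/(\alpha + 1)}$, while the interaction loss is at least $\eps^4 (\hat r_\eps - r_-) \asymp \eps^4 \hat r_\eps = \eps^{(3 \alpha + \lambda + 4)/(\alpha + 1)}$. The loss strictly dominates the gain precisely when $\lambda (2 \alpha - 1) > \alpha + 4$, which is assured by the hypothesized lower bound on $\lambda$.

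A final consistency check is that the error $O(\eps^{3 + \mu})$ in the expansion of Corollary \ref{existence:cor:estimate_reduced} should not swamp the margin of the inner-boundary comparison. This requires $\mu > (\lambda + 1)/(\alpha + 1)$, which is exactly the range of $\mu$ delivered by the corollary. With this, all three boundary components are excluded for $\eps$ small, and the minimizer must lie in $R_\eps \times \coi{0, 2 \pi} \times \cci{0, \pi}$.
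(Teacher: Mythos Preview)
Your plan is essentially the paper's own strategy: obtain a minimizer on the compact closure, then rule out the three pieces of $\partial R_\eps$ by comparing with the interior reference radius $\hat r_\eps=\eps^{-(\alpha-\lambda)/(\alpha+1)}$ (which the paper denotes $\rho_n$), using the expansion of Corollary~\ref{existence:cor:estimate_reduced} and the leading Taylor term of $\kappa$. The identification of the inner boundary as the critical case and of the error constraint $\mu>(\lambda+1)/(\alpha+1)$ matches the paper exactly.

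There are two minor differences worth noting. First, for the algebraic outer boundary $r_+=\eps^{-\alpha/(\alpha+1)}$ the paper does \emph{not} do an energy comparison: it observes that the level-set constraint $\max_{\abs{x}=1}V(\eps r_+ x)\le 1+\eps^{3\alpha/(\alpha+1)}$, which must hold on $\overline{R_\eps}$, is incompatible with the lower bound coming from $g''(0)>0$, so this boundary component is actually empty. Your energy comparison there is still valid, but the parenthetical ``using $\lambda<\alpha-1$'' is a slip---the relevant inequality of exponents ($2\alpha/(\alpha+1)<2\alpha(\lambda+1)/(\alpha+1)$) only needs $\lambda>0$. Second, you exploit the two-sided estimate $V(y)-1\asymp\abs{y}^{2\alpha}$ (from $g(0)=g'(0)=0$ and $g''(0)>0$), which is sharper than the one-sided $g(y)\le\abs{y}$ that the paper uses in its Case~2 estimate; this is why your inner-boundary condition $\lambda(2\alpha-1)>\alpha+4$ is weaker than the paper's $\lambda>2(\alpha+2)/(\alpha-1)$, though both follow from the standing hypotheses. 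The same two-sided bound is what makes your level-set boundary argument clean: it lets you transfer the size of $V-1$ from the maximizing direction to \emph{every} direction $P(1,\theta,\varphi_j)$ at that radius, which is a step the paper leaves implicit.
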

\begin{proof}
Disconsider the notation \eqref{existence:eqn:translation_of_varphi} exclusively in this proof. Let $\eps_0 > 0$ be such that the conclusions of Lemma \ref{solving:lem:solution_auxiliary} and Corollary \ref{existence:cor:estimate_reduced} hold. Associate each $\eps \in \ooi{0, \eps_0}$ to a
$
\parens{r_\eps, \theta_\eps, \varphi_\eps}
\in
\overline{R_\eps} \times \coi{0, 2 \pi} \times \cci{0, \pi}
$
such that
\[
\overline{\Phi_\eps} \parens{r_\eps, \theta_\eps, \varphi_\eps}
=
\min \overline{\Phi_\eps}
=
\inf \Phi_\eps,
\]
which is possible because $\overline{\Phi_\eps}$ is continuous and
$\overline{R_\eps} \times \mathbb{S}^1 \times \cci{0, \pi}$ is compact.

We claim that, up to shrinking $\eps_0$,
$
\parens{r_\eps, \theta_\eps, \varphi_\eps}
\in
R_\eps \times \coi{0, 2 \pi} \times \cci{0, \pi}
$
for every $\eps \in \ooi{0, \eps_0}$. By contradiction, suppose that we can fix
$\set{\eps_n}_{n \in \nat} \subset \ooi{0, \eps_0}$
such that
$\eps_n \to 0$ as $n \to \infty$ and
\[
\parens{r_n, \theta_n, \varphi_n}
:=
\parens{r_{\eps_n}, \theta_{\eps_n}, \varphi_{\eps_n}}
\in
\partial \parens{R_{\eps_n}} \times \coi{0, 2 \pi} \times \cci{0, \pi}
\]
for every $n \in \nat$. Due to the Pigeonhole Principle, one of the following holds up to subsequence:
\begin{enumerate}
\item \label{Case1}
$r_n = \eps_n^{- \frac{\alpha}{\alpha + 1}}$ for every $n \in \nat$;
\item \label{Case2}
$
r_n = \eps_n^{\parens*{\beta - \frac{\alpha - \lambda}{\alpha + 1}}}
$
for every $n \in \nat$;
\item \label{Case3}
given $n \in \nat$,
$
V \parens*{\eps_n r_n P_n}
=
1 + \eps^{\frac{3 \alpha}{\alpha + 1}}
$
up to replacing $\parens{\theta_n, \varphi_n}$, where
$P_n := P \parens{1, \theta_n, \varphi_n}$.
\end{enumerate}

\emph{Case \ref{Case1}.}
It follows from \ref{V_2} that
\[
V \parens{\eps_n^{\frac{1}{\alpha + 1}} P_n}
=
1
+
g \parens{\eps_n^{\frac{1}{\alpha + 1}} P_n}^\alpha
\leq
1 + \eps_n^{\frac{3 \alpha}{\alpha + 1}},
\]
for every $n \in \nat$, so
\[
\limsup_{n \to \infty}
\frac{1}{\eps_n^{\frac{1}{\alpha + 1}}}
\cdot
\frac{
	\nabla g \parens{\eps_n^{\frac{1}{\alpha + 1}} P_n}
	\cdot
	P_n
}{3 \eps_n^{\frac{1}{\alpha + 1}}}
\leq
1
\]
which contradicts the fact that $g'' \parens{0}$ is positive-definite.

\emph{Before cases \ref{Case2} and \ref{Case3}.}
Due to \ref{V_2} and Corollary \ref{existence:cor:estimate_reduced}, there exists $C_2 > 0$ and $n_0 \in \nat$ such that given
$n \geq n_0$,
\begin{multline}
\label{eqn:estimate}
\overline{\Phi_{\eps_n}} \parens{r_n, \theta_n, \varphi_n}
\leq
\Phi_{\eps_n} \parens{\rho_n, \theta_n, \varphi_n}
\leq
\\
\leq
K C_0
+
K C_1 \parens*{
	1
	+
	\eps_n^{\frac{\alpha \parens{\lambda + 1}}{\alpha + 1}}
}
+
C_1^2 \eps_n^3
\parens*{
	K^2 - \delta \eps_n^{\frac{\lambda + 1}{\alpha + 1}}
}
+
C_2 \eps_n^{3 + \mu},
\end{multline}
where
$
\rho_n
:=
\eps_n^{- \frac{\alpha - \lambda}{\alpha + 1}}
$
and
$\delta := \sum_{1 \leq j < k \leq K} \abs{d_{j, k}}$.

\emph{Case \ref{Case2}.}
Similarly, there exist $C_3 > 0$ and $n_1 \geq n_0$ such that
\begin{multline} \label{eqn:estimate_Case2}
K \parens{C_0 + C_1}
+
C_1^2 \eps_n^3 \parens*{
	K^2
	-
	\delta \eps_n^{\parens*{\beta + \frac{\lambda + 1}{\alpha + 1}}}
}
+
C_3 \eps_n^{3 + \mu}
\leq
\\
\leq
\overline{\Phi_{\eps_n}} \parens{r_n, \theta_n, \varphi_n}
=
\inf \Phi_{\eps_n}
\end{multline}
for every $n \geq n_1$. In view of \eqref{eqn:estimate} and \eqref{eqn:estimate_Case2},
\[
0
<
C_1^2 \delta \eps_n^{\parens*{3 + \frac{\lambda + 1}{\alpha + 1}}}
\leq
K C_1 \eps_n^{\frac{\alpha \parens{\lambda + 1}}{\alpha + 1}}
+
C_1^2
\delta
\eps_n^{\parens*{3 + \beta + \frac{\lambda + 1}{\alpha + 1}}}
+
\parens{C_2 - C_3} \eps_n^{3 + \mu}
\]
for every $n \geq n_1$, which is not possible because
$\lambda > 2 \parens{\alpha + 2} / \parens{\alpha - 1}$.

\emph{Case \ref{Case3}.}
There exist $C_3 > 0$ and $n_1 \geq n_0$ such that
\begin{multline}\label{eqn:estimate_Case3}
K C_0
+
K C_1
+
C_1 \eps_n^{\frac{3 \alpha}{\alpha + 1}}
+
C_1^2 \eps_n^3 \parens*{K^2 - \delta \eps_n^{\frac{1}{\alpha + 1}}}
+
C_3 \eps_n^{3 + \mu}
\leq
\\
\leq
\overline{\Phi_{\eps_n}} \parens{r_n, \theta_n, \varphi_n} = \inf \Phi_{\eps_n}
\end{multline}
whenever $n \geq n_1$. Due to \eqref{eqn:estimate} and \eqref{eqn:estimate_Case3}, we obtain
\[
0
<
C_1 \eps_n^{\frac{3 \alpha}{\alpha + 1}}
+
C_1^2 \delta \eps_n^{\parens*{3 + \frac{\lambda + 1}{\alpha + 1}}}
\leq
K C_1 \eps_n^{\frac{\alpha \parens{\lambda + 1}}{\alpha + 1}}
+
C_1^2 \delta \eps_n^{\parens*{3 + \frac{1}{\alpha + 1}}}
+
\parens{C_2 - C_3} \eps_n^{3 + \mu}
\]
for every $n \geq n_1$, which is not possible because $\lambda > 2$.
\end{proof}

It is only at this point that the necessity of considering pseudo-critical points whose peaks are placed according to specific geometric configurations becomes apparent.

\begin{rmk}
\label{reduced:rmk:impossible}
Suppose that we considered pseudo-critical points whose peaks are placed according to the following set (obtained as a naïve generalization of $\Lambda_\eps$ in \cite[Section 3]{ruizClusterSolutionsSchrodingerPoissonSlater2011}):
\begin{multline*}
\Lambda_\eps = \left\{
	\zeta \in \parens{\real^3}^K:
	\text{given}~j \neq k~\text{in}~\set{1, \ldots, K},
\right.
\\
\left.
	\frac{\eps^\beta}{\eps^{\frac{\alpha - \lambda}{\alpha + 1}}}
	<
	\abs{\zeta^j - \zeta^k}
	<
	\frac{1}{\eps^{\frac{\alpha}{\alpha + 1}}}
	\quad \text{and} \quad
	V \parens{\eps \zeta^j}
	<
	1 + \eps^{\frac{3 \alpha}{\alpha + 1}}
\right\}.
\end{multline*}
In this situation, Case \ref{Case2} would instead read as
\begin{enumerate}
\setcounter{enumi}{1}
\item
there exist $j \neq k$ in $\set{1, \ldots, K}$ such that
$
\abs{\zeta^j - \zeta^k}
=
\eps_n^{\parens*{\beta - \frac{\alpha - \lambda}{\alpha + 1}}}
$
for every $n \in \nat$.
\end{enumerate}
Once again, there exist $C_3 > 0$ and $n_1 \geq n_0$ such that
\begin{multline*}
K \parens{C_0 + C_1}
+
K C_1^2 \eps_n^3
+
2 C_1^2 \eps_n^3
\parens*{
	1
	-
	\frac{1}{2}
	\eps_n^{\parens*{\beta + \frac{\lambda + 1}{\alpha + 1}}}
}
+
\\
+
\parens*{K \parens{K - 1} - 2} C_1^2 \eps_n^3
\parens*{
	1
	-
	\frac{1}{2} \eps_n^{\frac{1}{\alpha + 1}}
}
+
C_3 \eps_n^{3 + \mu}
\leq
\overline{\Phi_{\eps_n}} \parens{r_n, \theta_n, \varphi_n}
=
\inf \Phi_{\eps_n}
\end{multline*}
for every $n \geq n_1$ and we can only continue the argument as before if $K = 2$.
\end{rmk}

\subsection{Proof of Theorem \ref{intro:thm}}
Fix $\eps_0 > 0$ such that Lemmas \ref{solving:lem:solution_auxiliary}, \ref{lem:natural_constraint} and \ref{reduced_functional:lem:existence_of_minimum} hold. In view of Lemma \ref{reduced_functional:lem:existence_of_minimum}, we can fix
\[
\set*{
	\parens{r_\eps, \theta_\eps, \varphi_\eps}
	\in
	R_\eps \times \coi{0, 2 \pi} \times \cci{0, \pi}:
	\quad
	\Phi_\eps \parens{r_\eps, \theta_\eps, \varphi_\eps}
	=
	\inf \Phi_\eps
}_{0 < \eps < \eps_0}.
\]
Given $\eps \in \ooi{0, \eps_0}$, let
\[
w_\eps
=
W_{r_\eps, \theta_\eps, \varphi_\eps}
+
n_{\eps, r_\eps, \theta_\eps, \varphi_\eps},
\]
so that $\nabla I_\eps \parens{w_\eps} = 0$ due to Lemma \ref{lem:natural_constraint}. The limit
$\norm{w_\eps - W_{r_\eps, \theta_\eps, \varphi_\eps}}_{H^1} \to 0$
as $\eps \to 0$ holds due to Lemma \ref{solving:lem:solution_auxiliary}, while the other limits follow by construction.

\qed

\sloppy
\printbibliography
\end{document}